\theoremstyle{plain}
\newtheorem{Pocz}{Poczatek}[section]
\newtheorem{Proposition}[Pocz]{Proposition}
\newtheorem{Theorem}[Pocz]{Theorem}
\newtheorem{Corollary}[Pocz]{Corollary}
\newtheorem{Lemma}[Pocz]{Lemma}
\newtheorem{Observation}[Pocz]{Observation}
\newtheorem{Question}[Pocz]{Question}
\newtheorem{Example}[Pocz]{Example}
\theoremstyle{definition}
\newtheorem{Definition}[Pocz]{Definition}
\theoremstyle{remark}
\newtheorem{Remark}[Pocz]{Remark}
\DeclareMathOperator*{\diam}{diam}
\def\asdim{\text{asdim}}
\numberwithin{equation}{section}
\title[Preserving coarse properties]
{Preserving coarse properties}
\author{J.~Dydak}
\address{University of Tennessee, Knoxville, TN 37996, USA}
\email{jdydak@utk.edu}
\author{\v Z. Virk}
\address{
Univerza v Ljubljani,
Jadranska ulica 19,
SI-1111 Ljubljana,
Slovenija }
\email{zigavirk@gmail.com}
\date{ \today
}
\keywords{asymptotic dimension, asymptotic Property C, coarse geometry,
coarsely n-to-1 functions, Lipschitz maps, metric sparsification property, straight finite decomposition complexity}
\subjclass[2000]{Primary 54F45; Secondary 55M10}
\thanks{This research was supported by the Slovenian Research
Agency grants P1-0292-0101 and J1-2057-0101.}
\begin{document}
\maketitle
\begin{center}
\today
\end{center}

\begin{abstract}
The aim of this paper is to investigate properties preserved and co-preserved by coarsely $n$-to-1 functions, in particular by the quotient maps $X\to X/\sim$ induced by a finite group $G$ acting by isometries on a metric space $X$. The coarse properties we are mainly interested in are related to asymptotic dimension and its generalizations: having finite asymptotic dimension, asymptotic Property C, straight finite decomposition complexity, countable asymptotic dimension, and metric sparsification property. We provide an alternative description of asymptotic Property C and we prove that the class of spaces with straight finite decomposition complexity coincides with the class of spaces of countable asymptotic dimension.
\end{abstract}

\section{Introduction}

The main topic of this paper is preservation and co-preservation of coarse properties
by certain classes of functions. The most important of them is the class of coarsely $n$-to-1
functions recently introduced by Miyata and Virk \cite{MV}. As shown in Section \ref{ASDIMOfFunctions}
that class is contained in the class of functions of asymptotic dimension $0$ introduced in \cite{BDLM}.

A class of functions $f:X\to Y$ preserves a coarse propery $\mathcal{P}$ if $f(X)$ has $\mathcal{P}$ whenever
$X$ has $\mathcal{P}$. A class of functions $f:X\to Y$ co-preserves a coarse propery $\mathcal{P}$ if $X$ has $\mathcal{P}$ whenever
$f(X)$ has $\mathcal{P}$. 

The coarse properties we are mainly interested in are related to asymptotic dimension and its generalizations: having finite asymptotic dimension, asymptotic Property C, straight finite decomposition complexity, countable asymptotic dimension, and metric sparsification property.

Besides investigating properties being preserved and co-preserved by coarsely $n$-to-1 functions and providing an alternative description of asymptotic Property C, our main result of the paper is that $X$ being of straight finite decomposition complexity is actually equivalent to $X$ having countable asymptotic dimension.

\section{Preliminaries}

One of the main ideas in topology is \textbf{approximating} general topological spaces $X$ by simplicial complexes. This is done by first selecting a cover $\mathcal{U} $ of $X$ and then constructing its \textbf{nerve} $N(\mathcal{U} )$. Recall $N(\mathcal{U})$ has $\mathcal{U} $ as its vertices and $[U_0,\ldots,U_n]$ is an $n$-simplex of $N(\mathcal{U})$ if $\bigcap\limits_{i=0}^n U_i\ne \emptyset$.

One of the most important characteristics of a simplicial complex $K$ is its \textbf{combinatorial dimension} $\dim(K)$, the supremum over all $n$ such that $K$ has an $n$-simplex. Therefore it makes sense to introduce the dimension of a family of subsets of a set $X$:
\begin{Definition}\label{DimOfCoverDef}
The \textbf{dimension} $\dim(U)$ of a family of subsets of a set $X$ is the combinatorial dimension of its nerve.
\end{Definition}

That leads to a concise explanation of the covering dimension $\dim(X)$ of a topological space: $\dim(X)\leq n$ if every open cover of $X$ can be refined by an open cover of dimension at most $n$.

Its dualization in coarse topology leads to the following definition:

\begin{Definition}\label{AsDimDef}
A metric space $X$ is of \textbf{asymptotic dimension} $asdim(X)$ at most $n$ if every uniformly bounded cover of $X$ can be coarsened to a uniformly bounded family of dimension at most $n$.
\end{Definition}

It turns out it makes sense to look at a metric space at different scales $R\ge 0$.
\begin{Definition}
Given a family of subsets $\mathcal{U} $ of metric space $X$ and a scale $R\ge 0$, the \textbf{dimension} $\dim_R(\mathcal{U} )$ of $\mathcal{U} $ at \textbf{scale} $R$ (or $R$-dimension in short) is the dimension of the family $B(\mathcal{U} ,R)$ of $R$-balls $B(U,R)$, $U\in \mathcal{U} $. Here $B(U,R)$ consists of $U$ and all points $x$ in $X$ such that $d_X(x,u) < R$ for some $u\in U$. The $R$-multiplicity of a cover $\mathcal{U}$ is defined to be $\dim_R(\mathcal{U})+1$.
\end{Definition}

\begin{Observation}[see \cite{Grom} or \cite{BellDranish A Hurewicz Type}]
$asdim(X)\leq n$ if and only if for each $R > 0$ there is a uniformly bounded cover $\mathcal{U} $ of $X$ such that $\dim_R(\mathcal{U} )\leq n$.
\end{Observation}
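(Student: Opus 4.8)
The plan is to prove the two implications separately; in both directions the key device is the $R$-enlargement operation $\mathcal{U}\mapsto B(\mathcal{U},R)$ together with its partial inverse, the $R$-interior.

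For the easy implication ($\Leftarrow$), suppose that for every $R>0$ there is a uniformly bounded cover $\mathcal{U}$ with $\dim_R(\mathcal{U})\le n$; I want to verify Definition \ref{AsDimDef}. Let $\mathcal{W}$ be an arbitrary uniformly bounded cover and put $D=\sup_{W\in\mathcal{W}}\diam(W)<\infty$. Fix any $R>D$ and take the cover $\mathcal{U}$ supplied by the hypothesis for this $R$. I claim $B(\mathcal{U},R)$ is the required coarsening: it is uniformly bounded (each $B(U,R)$ has diameter at most $\diam(U)+2R$), it has dimension $\dim_R(\mathcal{U})\le n$ by construction, and it coarsens $\mathcal{W}$. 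Indeed, given $W\in\mathcal{W}$, pick $x\in W$ and then $U\in\mathcal{U}$ with $x\in U$ (possible since $\mathcal{U}$ covers $X$); every $w\in W$ satisfies $d_X(w,x)\le\diam(W)\le D<R$, so $w\in B(U,R)$ and hence $W\subseteq B(U,R)$. As $\mathcal{W}$ was arbitrary, this gives $\asdim(X)\le n$.

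For the harder implication ($\Rightarrow$), suppose $\asdim(X)\le n$ and fix $R>0$. I would start from the uniformly bounded cover $\mathcal{W}=\{B(x,R):x\in X\}$ and apply Definition \ref{AsDimDef} to obtain a uniformly bounded family $\mathcal{V}$ of dimension at most $n$ that coarsens $\mathcal{W}$; thus every ball $B(x,R)$ lies in some member of $\mathcal{V}$. The candidate cover is obtained by passing to $R$-interiors: for each $V\in\mathcal{V}$ set $U_V=\{x\in X:B(x,R)\subseteq V\}$ and let $\mathcal{U}=\{U_V:V\in\mathcal{V}\}$. Two things must be checked. First, $\mathcal{U}$ covers $X$: given $x$, the ball $B(x,R)$ is contained in some $V$, whence $x\in U_V$. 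Second, $\dim_R(\mathcal{U})\le n$: from $u\in U_V$ we get $B(u,R)\subseteq V$, so $B(U_V,R)=\bigcup_{u\in U_V}B(u,R)\subseteq V$; consequently each member of the indexed family $B(\mathcal{U},R)$ sits inside the correspondingly indexed member of $\mathcal{V}$, so the multiplicity of $B(\mathcal{U},R)$ cannot exceed that of $\mathcal{V}$. Hence $\dim_R(\mathcal{U})\le\dim(\mathcal{V})\le n$, and $\mathcal{U}$ is uniformly bounded because $U_V\subseteq V$.

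The only genuinely delicate point is the $R$-interior step in the forward direction: one must take the initial cover at a scale large enough (here by $R$-balls) so that after coarsening the $R$-interiors still cover $X$, and one must keep the family indexed by $\mathcal{V}$ so that the inclusions $B(U_V,R)\subseteq V$ translate into the multiplicity bound. Everything else is bookkeeping with the enlargement radius, which is harmless because the statement quantifies over all $R>0$.
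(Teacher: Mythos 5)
Your proof is correct and complete. There is, however, nothing in the paper to compare it against: the Observation is stated with citations to \cite{Grom} and \cite{BellDranish A Hurewicz Type} and no proof is given, so you have supplied the standard argument that the paper leaves to the literature. Both directions check out against the paper's definitions. In the ($\Leftarrow$) direction, choosing $R$ strictly larger than the mesh $D$ of the given cover $\mathcal{W}$ makes $B(\mathcal{U},R)$ a coarsening of $\mathcal{W}$ (the strict inequality $D<R$ matches the strict inequality in the paper's definition of $B(U,R)$), and $\dim\bigl(B(\mathcal{U},R)\bigr)=\dim_R(\mathcal{U})\le n$ holds by the very definition of $\dim_R$, so Definition \ref{AsDimDef} is verified. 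In the ($\Rightarrow$) direction you correctly isolated the delicate point: starting from the cover by $R$-balls guarantees that the $R$-interiors $U_V=\{x\in X: B(x,R)\subseteq V\}$ still cover $X$, and the inclusions $B(U_V,R)\subseteq V$ push the nerve bound from $\mathcal{V}$ down to $B(\mathcal{U},R)$; the indexing issue you mention is harmless even if one reads the nerve as built on sets rather than on an indexed family, since distinct sets $B(U_V,R)$ necessarily arise from distinct elements $V\in\mathcal{V}$, so any common point of $k+1$ distinct members of $B(\mathcal{U},R)$ is a common point of $k+1$ distinct members of $\mathcal{V}$. The argument uses only Definition \ref{AsDimDef} and the definition of $\dim_R$, so it is self-contained and could serve as a proof of the Observation within the paper.
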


The easiest case of estimating $R$-dimension is in the case of unions of $R$-disjoint families:
\begin{Observation}\label{DisUnionsObs}
If ${\mathcal{U}}=\bigcup\limits_{i=0}^{n}{\mathcal{U}}_i$ and each $\mathcal{U}_i$ is $R$-disjoint (that means $d(x,y)\ge R$ for $x$ and $y$ belonging to different elements of $\mathcal{U}_i$), then $\dim_R(\mathcal{U})\leq n$.
\end{Observation}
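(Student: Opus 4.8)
The plan is to translate the statement about $\dim_R$ into a statement about multiplicities and then reduce to the single-family case. By Definition \ref{DimOfCoverDef}, $\dim_R(\mathcal{U})=\dim(B(\mathcal{U},R))$ is the combinatorial dimension of the nerve of the family $\{B(U,R):U\in\mathcal{U}\}$. Thus $\dim_R(\mathcal{U})\le n$ is equivalent to the assertion that this nerve carries no simplex of dimension exceeding $n$, i.e. that no $n+2$ of the enlarged sets $B(U,R)$ share a common point. Reformulating once more, I would show directly that every point $x\in X$ lies in at most $n+1$ of the sets $B(U,R)$, $U\in\mathcal{U}$, since this multiplicity bound is exactly the condition that the nerve has no $(n+1)$-simplex.

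The key step is to handle a single $R$-disjoint family. I claim that for $U\ne U'$ belonging to the same $\mathcal{U}_i$ the enlarged sets are disjoint, $B(U,R)\cap B(U',R)=\emptyset$. Indeed, a point $x$ in the intersection would yield $u\in U$ and $u'\in U'$ with $d(x,u)$ and $d(x,u')$ both controlled, hence $u$ and $u'$ close to each other by the triangle inequality; since $u$ and $u'$ lie in different members of the $R$-disjoint family $\mathcal{U}_i$, this collides with the lower bound $d(u,u')\ge R$. Consequently the subfamily $\{B(U,R):U\in\mathcal{U}_i\}$ consists of pairwise disjoint sets, so any given $x$ belongs to at most one of them.

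Combining the two reductions finishes the argument: writing $B(\mathcal{U},R)=\bigcup_{i=0}^{n}\{B(U,R):U\in\mathcal{U}_i\}$ and using that $x$ meets at most one enlarged set from each of the $n+1$ subfamilies, one concludes that $x$ lies in at most $n+1$ of the sets $B(U,R)$ in total. Hence the nerve has no $(n+1)$-simplex and $\dim_R(\mathcal{U})\le n$, as desired. The one place that requires care---and which I expect to be the main obstacle---is the metric estimate in the middle step: one must verify that the radius used to form the neighborhoods $B(U,R)$ does not exceed the separation furnished by $R$-disjointness, since this is the sole point at which the hypothesis enters and the constants must be tracked honestly.
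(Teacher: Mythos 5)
Your approach is structurally the same as the paper's: reduce $\dim_R(\mathcal{U})\le n$ to the claim that every point lies in at most $n+1$ of the sets $B(U,R)$, and obtain that from pairwise disjointness of the $R$-balls within each single $R$-disjoint family $\mathcal{U}_i$. The problem is that this key claim --- precisely the step you flagged as ``the main obstacle'' --- is false, and your triangle-inequality sketch does not close. From $x\in B(U,R)\cap B(U',R)$ you get $u\in U$, $u'\in U'$ with $d(x,u)<R$ and $d(x,u')<R$, hence only $d(u,u')<2R$; this is perfectly compatible with the separation $d(u,u')\ge R$, so there is no collision. Concretely: take $X=\mathbb{R}$, $R=1$, $n=0$, $\mathcal{U}_0=\{\{0\},\{1\}\}$. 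This family is $1$-disjoint, yet $B(\{0\},1)=(-1,1)$ and $B(\{1\},1)=(0,2)$ overlap on $(0,1)$, so the nerve of the $1$-balls has a $1$-simplex and $\dim_1(\mathcal{U}_0)=1$, not $0$. So it is not merely that your middle step is unproved: read literally with the paper's definitions (open $R$-neighborhoods, separation $\ge R$), the Observation itself fails by a factor of $2$, and no argument can rescue it as stated.

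What your outline actually proves is the statement with repaired constants: if each $\mathcal{U}_i$ is $2R$-disjoint, then distinct $R$-balls within $\mathcal{U}_i$ are disjoint (now $d(u,u')<2R$ genuinely contradicts $d(u,u')\ge 2R$), each point meets at most one enlarged set per family, and $\dim_R(\mathcal{U})\le n$ follows; equivalently, $R$-disjointness of the $\mathcal{U}_i$ yields $\dim_{R/2}(\mathcal{U})\le n$. For what it is worth, you are in good company: the paper's own proof asserts without justification that ``$R$-balls of elements of $\mathcal{U}_i$ form a disjoint family for $R>0$,'' which is exactly the same false step, and the discrepancy is harmless in every application the paper makes of this Observation, since in coarse geometry one may always invoke it at a larger scale. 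But as a self-contained proof of the stated claim, yours (like the paper's) has a genuine gap: the metric estimate you correctly identified as the crux cannot be ``tracked honestly'' to give $R$ rather than $2R$, so you must either strengthen the hypothesis to $2R$-disjointness or weaken the conclusion to scale $R/2$.
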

\begin{proof}
It is sufficient to consider $R=0$ as $R$-balls of elements of $\mathcal{U}_i$ form a disjoint family for $R > 0$.
Notice each $x\in X$ belongs to at most one element of $\mathcal{U}_i$, hence it belongs to at most $(n+1)$ elements of $\mathcal{U}$.
\end{proof}

\begin{Lemma} \label{LemCoarseMapPullBackDisjointness}
 Suppose $f\colon X \to Y$ is coarse with control $E$. If $\mathcal V$ is an $E(d)$ disjoint collection of sets in $Y$ then $\{f^{-1}(V)\mid V\in\mathcal V\}$ is a $d$-disjoint collection.
\end{Lemma}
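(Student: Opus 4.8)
The plan is to argue directly from the definitions, following a pair of points lying in two distinct preimage sets forward through $f$ and then invoking the control to pull the resulting distance estimate back to $X$. Fix distinct members $V, V' \in \mathcal V$ and arbitrary points $x \in f^{-1}(V)$ and $x' \in f^{-1}(V')$; since ``$d$-disjoint'' means (by the convention recorded in Observation \ref{DisUnionsObs}) that any two points in different members are at distance at least $d$, the entire claim reduces to verifying $d_X(x, x') \ge d$. The first step is cheap: because $f(x) \in V$ and $f(x') \in V'$ sit in two different members of the $E(d)$-disjoint family $\mathcal V$, the disjointness hypothesis in $Y$ gives at once $d_Y(f(x), f(x')) \ge E(d)$.

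The second step transports this lower bound back to $X$ using that $f$ is coarse with control $E$, i.e. that $d_Y(f(x), f(x')) \le E\big(d_X(x, x')\big)$ for all $x, x'$, with $E$ non-decreasing. I would run this contrapositively: assuming $d_X(x, x') < d$ and chaining the two inequalities yields $E(d) \le d_Y(f(x), f(x')) \le E\big(d_X(x, x')\big)$, and monotonicity of $E$ then squeezes everything to equality. To turn this into a genuine contradiction one wants the strict inequality $E\big(d_X(x,x')\big) < E(d)$; this is automatic once $E$ is taken strictly increasing, which is harmless since any control can be replaced by a strictly increasing majorant. Hence $d_X(x, x') \ge d$, and since $x, x'$ and the pair $V, V'$ were arbitrary, $\{f^{-1}(V) \mid V \in \mathcal V\}$ is $d$-disjoint.

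The only point genuinely requiring care — and the one I would pin down before writing anything else — is the bookkeeping between the strict inequality used to define $R$-balls and the non-strict inequality ($d(x,y) \ge R$) defining $R$-disjointness, together with whether the control $E$ is merely non-decreasing or may be assumed strictly increasing. Everything else is a two-line chase through the definitions; I do not expect any real obstacle, and the lemma is best viewed as the elementary observation that coarse maps pull disjoint families back to disjoint families \emph{at a controlled scale}, namely the scale $d$ corresponding to the gap $E(d)$ downstairs.
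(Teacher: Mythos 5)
Your proof is correct and is essentially the paper's own argument: the paper runs the identical definition chase in the contrapositive direction (``if two points are at distance at most $d$ in $X$ then their images are at most $E(d)$ apart in $Y$, hence they cannot belong to different elements of $\mathcal V$''), so the two write-ups differ only in which contrapositive is displayed. The boundary issue you flag is genuine --- under the paper's non-strict conventions ($d_Y(f(x),f(x'))\le E(d_X(x,x'))$ for the control, $\ge E(d)$ for disjointness) the borderline case $d_Y(f(x),f(x'))=E(d)$ is not excluded, and the paper's one-line ``hence'' silently glosses over it --- but note that your proposed repair is not quite free of charge: replacing $E$ by a strictly increasing majorant $E'$ strengthens the hypothesis of the lemma, since an $E(d)$-disjoint family need not be $E'(d)$-disjoint, so strictly speaking you prove the (harmless, and in all of the paper's applications sufficient) variant of the lemma in which the control is assumed strictly increasing.
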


\begin{proof}
 If two points are at distance at most $d$ in $X$ then their images are at most $E(d)$ apart in $Y$ hence they cannot belong to different elements of $\mathcal V$.
\end{proof}

For technical reasons it is convenient to achieve the situation of Observation \ref{DisUnionsObs}:

\begin{Lemma}\label{LemmaMultToDisjointness}
Suppose $X$ is a metric space, $n\ge 1$, and $M, R > 0$.
If ${\mathcal{U}}=\{U_s\}_{s\in S}$ is a cover of $X$ of $R$-dimension at most $n$, then there is a cover ${\mathcal{V}} =\bigcup\limits_{i=0}^{n}{\mathcal{V}}^i$ of $X$
such that each ${\mathcal{V}}^i$ is $\frac{R}{n+1}$-disjoint. Furthermore:
\begin{enumerate}
  \item every element of $\mathcal{V}$ is contained in an intersection of at most $(n+1)$-many $R$-neighborhoods of elements of $\mathcal{U}$. In particular, for each finite subset $T$ of $S$ we have $B(W_T,-R/(2n+2))\subset \bigcap_{t\in T}B(U_t, R)$ (see the proof below for notation);
      \item if $\mathcal{U}$ is $M$-bounded then $\mathcal{V}$ is $(M+2R)$-bounded.
\end{enumerate}
\end{Lemma}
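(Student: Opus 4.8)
The plan is to realize the families $\mathcal V^i$ as ``levels'' of a partition-of-unity-type collection of Lipschitz functions attached to $\mathcal U$. For each $s\in S$ I set
$$f_s(x)=\max\{0,\,R-d(x,U_s)\},$$
a $1$-Lipschitz function $X\to[0,R]$ with $f_s(x)>0$ exactly when $x\in B(U_s,R)$. Two features drive everything: since $\dim_R(\mathcal U)\le n$ the balls $B(U_s,R)$ have multiplicity at most $n+1$, so at each $x$ at most $n+1$ of the numbers $f_s(x)$ are positive; and since $\mathcal U$ covers $X$, every $x$ lies in some $U_{s_0}$, whence $f_{s_0}(x)=R$ and $\max_s f_s(x)=R$. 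For a finite $T\subseteq S$ I then define the notation referred to in the statement,
$$W_T=\Big\{x\in X:\ \min_{t\in T}f_t(x)-\max_{s\notin T}f_s(x)\ \ge\ \tfrac{R}{n+1}\Big\}$$
(with $\max$ over the empty family read as $0$), and put $\mathcal V^i=\{W_T:\ |T|=i+1\}$ and $\mathcal V=\bigcup_{i=0}^n\mathcal V^i$.

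Conclusions (1) and (2) will come out almost for free from the definition. Indeed $x\in W_T$ forces $f_t(x)\ge R/(n+1)>0$ for every $t\in T$, so $W_T\subseteq\bigcap_{t\in T}B(U_t,R)$; as this needs $|T|$ of the $f_s(x)$ to be positive, $W_T=\emptyset$ once $|T|>n+1$, and the nonempty members of $\mathcal V$ all occur among $\mathcal V^0,\dots,\mathcal V^n$. The inclusion $W_T\subseteq\bigcap_{t\in T}B(U_t,R)$ with $|T|\le n+1$ is exactly (1) (the inner-neighborhood inclusion $B(W_T,-R/(2n+2))\subseteq\bigcap_{t\in T}B(U_t,R)$ is then automatic, since $B(W_T,-R/(2n+2))\subseteq W_T$), and choosing one $t_0\in T$ gives $W_T\subseteq B(U_{t_0},R)$, of diameter at most $M+2R$ when $\mathcal U$ is $M$-bounded, which is (2).

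To see that $\mathcal V$ covers $X$ I fix $x$ and list the values $f_s(x)$ in nonincreasing order as $v_1\ge v_2\ge\cdots$, padding with zeros, so that $v_1=R$ while $v_{n+2}=0$ (only $\le n+1$ values are positive). The $n+1$ gaps $v_j-v_{j+1}$, $j=1,\dots,n+1$, telescope to $v_1-v_{n+2}=R$, so some gap satisfies $v_{j^*}-v_{j^*+1}\ge R/(n+1)$. Taking $T$ to be the index set of the top $j^*$ values (well defined because that gap is strictly positive) gives $\min_{t\in T}f_t(x)-\max_{s\notin T}f_s(x)=v_{j^*}-v_{j^*+1}\ge R/(n+1)$, i.e.\ $x\in W_T$ with $|T|=j^*\in\{1,\dots,n+1\}$.

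The crux, and the step I expect to be the main obstacle, is the $\tfrac{R}{n+1}$-disjointness of each $\mathcal V^i$: a single coordinate need not separate two sets, because the thresholds defining $W_T$ float with $x$. Suppose $x\in W_T$ and $y\in W_{T'}$ with $|T|=|T'|=i+1$ and $T\ne T'$; since the sets have equal size I may choose $t^*\in T\setminus T'$ and $u^*\in T'\setminus T$. From $x\in W_T$ (using $u^*\notin T$) I get $f_{t^*}(x)-f_{u^*}(x)\ge R/(n+1)$, and from $y\in W_{T'}$ (using $t^*\notin T'$) I get $f_{u^*}(y)-f_{t^*}(y)\ge R/(n+1)$. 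Adding and regrouping,
$$\big(f_{t^*}(x)-f_{t^*}(y)\big)+\big(f_{u^*}(y)-f_{u^*}(x)\big)\ \ge\ \tfrac{2R}{n+1},$$
so at least one of the $1$-Lipschitz functions $f_{t^*},f_{u^*}$ differs by $\ge R/(n+1)$ between $x$ and $y$, forcing $d(x,y)\ge R/(n+1)$. (Setting $y=x$ shows two sets of equal size cannot meet, so $\mathcal V^i$ is genuinely $R/(n+1)$-disjoint.) The delicate point is exactly this two-sided estimate: neither one-sided inequality yields the gap, but their sum guarantees one honest coordinatewise separation.
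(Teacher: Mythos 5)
Your proof is correct, and it shares the paper's overall blueprint --- attach to each $U_s$ a $1$-Lipschitz bump function that is positive exactly on $B(U_s,R)$ and at least $R$ on $U_s$, form sets $W_T$ on which the values indexed by $T$ dominate all the others, group these by $|T|\le n+1$, and locate the right $T$ for a given point by pigeonholing on the at most $n+1$ gaps that telescope to $R$ --- but it departs from the paper at the decisive disjointness step, in a way worth recording. The paper defines $W_T$ by the strict inequality $\min_{t\in T}f_t(x)>\sup_{s\notin T}f_s(x)$ (with $f_s(x)=\dist(x,X\setminus B(U_s,R))$ rather than your truncated $\max\{0,R-d(x,U_s)\}$), shows the cover $\{W_T\}$ has Lebesgue number at least $R/(2n+2)$, takes $\mathcal{V}^i$ to be the shrunken sets $B(W_T,-R/(2n+2))$, and infers $\frac{R}{n+1}$-disjointness from the pairwise disjointness of distinct $W_T$'s of equal cardinality (its ``Fact 1''). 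You instead build the quantitative gap $\frac{R}{n+1}$ into the definition of $W_T$, take $\mathcal{V}^i$ to be the $W_T$'s themselves with no shrinking, and prove separation by the two-sided estimate $\bigl(f_{t^*}(x)-f_{t^*}(y)\bigr)+\bigl(f_{u^*}(y)-f_{u^*}(x)\bigr)\ge\frac{2R}{n+1}$ combined with $1$-Lipschitzness. This is a genuine gain in rigor: in a general (non-geodesic) metric space, knowing that $x$ and $y$ carry $R/(2n+2)$-collars inside the disjoint sets $W_T$ and $W_F$ only yields $d(x,y)\ge R/(2n+2)$ directly --- the extra factor of $2$ asserted by the paper would require either geodesics joining $x$ to $y$ or precisely the kind of Lipschitz computation you perform --- so your variant repairs the one step of the paper's argument that is not fully justified as written, while also making conclusions (1) and (2) immediate, since each element of your $\mathcal{V}$ visibly lies in $\bigcap_{t\in T}B(U_t,R)$ and hence has diameter at most $M+2R$. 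The only mismatch is notational: the clause in the statement about $B(W_T,-R/(2n+2))$ refers to the paper's $W_T$, and your observation that $B(W_T,-R/(2n+2))\subset W_T\subset\bigcap_{t\in T}B(U_t,R)$ holds for your $W_T$ handles it adequately.
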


\begin{proof}
Define $f_s(x)=dist(x,X\setminus B(U_s,R))$. For each finite subset $T$ of $S$
define
$$
W_T=\{x\in X \mid  \min\{f_t(x)\mid  t\in T\} > \sup\{f_s(x) \mid  s\in S\setminus T\}\}=
$$
$$
\{x\in X \mid  f_t(x)>f_s(x) \mid  \forall t\in T, \forall s\in S\setminus T\}\}.
$$
Notice $W_T=\emptyset$ if $T$ contains at least $n+2$ elements.
Also, notice Fact 1: $W_T\cap W_F=\emptyset$ if both $T$ and $F$
are different but contain the same
number of elements.
Let us estimate the Lebesgue number of ${\mathcal{W}}=\{W_T\}_{T\subset S}$.
Given $x\in X$ arrange all non-zero values $f_s(x)$ from the largest
to the smallest. Add $0$ at the end and look at gaps between those values.
The largest number is at least $R$, there are at most $(n+1)$ gaps,
so one of them is at least $\frac{R}{n+1}$. That implies the ball $B(x,R/(2n+2))$
is contained in one $W_T$ ($T$ consists of all $t$ to the left of the gap) hence the Lebesgue number is at least $R/(2n+2)$.
Define ${\mathcal{V}}^i$ as
$$
\{B(W_T,-R/(2n+2))\}=\{\{x\in W_T \mid B(x,R/(2n+2))\subset W_T\}\},
$$
where $T$ ranges through all subsets of $S$ containing exactly $i$ elements. By Fact 1 above, each $\mathcal{V}^i$ is an $\frac{R}{n+1}$-disjoint family.

For every finite subset $T$ of $S$ we have $B(W_T,-R/(2n+2))\subset \bigcap_{t\in T}B(U_t, R)$ by the definition of $W_T$ as $f_t$ is nonzero on $W_T$ for every $t\in T$. This proves (1). Consequently, sets $W_T$ are $(M+2R)$-bounded  and so are their subsets $B(W_T,-R/(2n+2))$, elements of $\mathcal{V} $, proving (2).
\end{proof}

With the help of Lemma \ref{LemmaMultToDisjointness} we are going to show similarity of asymptotic property C to having countable asymptotic dimension
(see \ref{CountableAsdim}).

\begin{Definition} [Dranishnikov \cite{Dran AsyTop}]
A metric space $X$ has \textbf{asymptotic property C} if for every sequence $R_1 < R_2 < \ldots$ there exists $n\in \mathbb{N}$
such that $X$ is the union of $R_i$-disjoint families $\mathcal{U}_i$, $1\leq i\leq n$,
that are uniformly bounded.
\end{Definition}

\begin{Theorem}
 A metric space $X$ has asymptotic property C if and only if there is a sequence of integers $n_i\ge 0$, $i\ge 1$, such that for any sequence of positive real numbers $R_i$, $i\ge 1$, there is a finite sequence
$\mathcal{V}_i$, $i\leq n$, of uniformly bounded families of subsets of $X$ such that 
the dimension of $\mathcal{V}_i$ at scale $R_i$ is at most $n_i$ for $i\leq n$ and $X=\bigcup\limits_{i=1}^n \mathcal{V}_i$.
\end{Theorem}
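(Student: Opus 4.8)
The plan is to prove the two implications separately, with the forward direction essentially immediate and the reverse direction carrying the real content.

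For the forward implication, suppose $X$ has asymptotic property C and take the constant sequence $n_i=0$. Given an arbitrary sequence of positive reals $R_i$, I would first replace it by a strictly increasing sequence $S_i$ with $S_i\ge 2R_i$, for instance $S_i=2\max\{R_1,\ldots,R_i\}+i$. Applying the definition of asymptotic property C to $S_1<S_2<\cdots$ produces an $n$ and uniformly bounded $S_i$-disjoint families $\mathcal{U}_i$, $i\le n$, covering $X$. Setting $\mathcal{V}_i=\mathcal{U}_i$, the inequality $S_i\ge 2R_i$ guarantees that distinct elements of $\mathcal{V}_i$ have disjoint $R_i$-balls (a common point would force two of their elements to be at distance $<2R_i\le S_i$), so $\dim_{R_i}(\mathcal{V}_i)\le 0=n_i$, as required.

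For the reverse implication, suppose the sequence $n_i$ exists; I want to verify asymptotic property C for an arbitrary increasing sequence $T_1<T_2<\cdots$. The key device is to convert the bounded-dimension families supplied by the hypothesis into disjoint families via Lemma \ref{LemmaMultToDisjointness}, which splits a family of $R$-dimension $\le m$ into $m+1$ families that are $\frac{R}{m+1}$-disjoint while keeping uniform boundedness. Since the $n_i$ are fixed in advance, I can form the partial sums $M_i=\sum_{j=1}^i (n_j+1)$ (with $M_0=0$) before invoking the hypothesis, and then choose the input scales $R_i:=(n_i+1)\,T_{M_i}$. Feeding this particular sequence into the hypothesis yields an $n$ and uniformly bounded families $\mathcal{V}_1,\ldots,\mathcal{V}_n$ with $\dim_{R_i}(\mathcal{V}_i)\le n_i$ and $\bigcup_{i=1}^n \mathcal{V}_i=X$. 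I would then apply Lemma \ref{LemmaMultToDisjointness} to each $\mathcal{V}_i$ (the case $n_i=0$ being immediate, since $\dim_{R_i}(\mathcal{V}_i)\le 0$ already forces distinct elements of $\mathcal{V}_i$ to be $R_i$-apart, making $\mathcal{V}_i$ itself $R_i$-disjoint), obtaining $n_i+1$ uniformly bounded families that are $\frac{R_i}{n_i+1}=T_{M_i}$-disjoint. Relabelling the resulting $M_n=\sum_{i=1}^n (n_i+1)$ families as $\mathcal{U}_1,\ldots,\mathcal{U}_{M_n}$ so that the pieces coming from $\mathcal{V}_i$ occupy the index block $(M_{i-1},M_i]$, each such $\mathcal{U}_k$ with $k\le M_i$ is $T_{M_i}$-disjoint and hence $T_k$-disjoint because $T$ is increasing. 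These $M_n$ families are uniformly bounded and cover $\bigcup_{i=1}^n \mathcal{V}_i=X$, which is exactly asymptotic property C with $N=M_n$.

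The main obstacle, and the point the argument is built around, is the apparent \emph{circularity} in the reverse direction: the number $N=M_n$ of disjoint families one ends up with depends on $n$, which is only revealed after the scales $R_i$ have been chosen, yet those scales must already be inflated by the right factors so that the post-splitting families are $T_k$-disjoint at the correct indices. This is resolved precisely because the $n_i$ are fixed before the $R_i$ are selected, so the partial sums $M_i$ — and therefore the inflation factors $(n_i+1)$ and target scales $T_{M_i}$ — are all determined in advance. A secondary technical point to check is that Lemma \ref{LemmaMultToDisjointness}, though stated for covers, applies to each $\mathcal{V}_i$ in the sense that its construction splits off families covering exactly the part of $X$ already covered by $\mathcal{V}_i$; since $\bigcup_{i=1}^n\mathcal{V}_i=X$, the collected pieces still cover $X$.
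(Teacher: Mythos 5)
Your proof is correct and follows essentially the same route as the paper: the forward direction with $n_i=0$ (you make explicit the factor-$2$ and monotone-rearrangement adjustments the paper calls obvious), and the reverse direction by pre-computing the block sizes $M_i=\sum_{j\le i}(n_j+1)$ from the fixed sequence $(n_i)$, inflating the input scales accordingly, splitting each $\mathcal{V}_i$ into $n_i+1$ disjoint uniformly bounded families via Lemma \ref{LemmaMultToDisjointness}, and concatenating the blocks. If anything, your bookkeeping is more careful than the paper's: you demand disjointness at scale $T_{M_i}$, where $M_i$ is the \emph{last} index of block $i$, whereas the paper's choice $R_i=\sum_{j\le i}(n_j+1)M_{m_j+1}$ with $m_j=\sum_{l\le j}n_l$ only guarantees $M_{m_i+1}$-disjointness while block $i$ extends to position $m_i+i$, a harmless off-by-one slip that your indexing repairs.
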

\begin{proof}
 In one direction the proof is obvious: namely, $n_i=0$ for all $i$ works.
 
 Suppose there is a sequence of integers $n_i\ge 0$, $i\ge 1$, such that for any sequence of positive real numbers $R_i$, $i\ge 1$, there is a finite sequence
$\mathcal{V}_i$, $i\leq n$, of uniformly bounded families of subsets of $X$ such that 
the dimension of $\mathcal{V}_i$ at scale $R_i$ is at most $n_i$ for $i\leq n$ and $X=\bigcup\limits_{i=1}^n \mathcal{V}_i$.

Given an increasing sequence $M_i$ of positive real numbers, first define $m_j$ as $\sum\limits_{i\leq j}n_i$ and then
 define $R_i$ as $\sum\limits_{j \leq i}(n_j+1)\cdot M_{m_j+1}$.
Pick a finite sequence
$\mathcal{V}_i$, $i\leq n$, of uniformly bounded families of subsets of $X$ such that 
the dimension of $\mathcal{V}_i$ at scale $R_i$ is at most $n_i$ for $i\leq n$ and $X=\bigcup\limits_{i=1}^n \mathcal{V}_i$.
Using Lemma \ref{LemmaMultToDisjointness} decompose each family $\mathcal{V}_i$ into the union
$\mathcal{U}^j_i$, $0\leq j\leq n_i+1$, of uniformly bounded families that are $R_i/(n_i+1)$-disjoint.
Order the new families by lexicographic order: first look at the subscript index, then look at superscript index.
The result is a finite sequence of uniformly bounded families that together comprise a cover of $X$
and the $i$-th family is $M_i$-disjoint.
\end{proof}

\section{Bornologous functions}

\begin{Definition}\label{DefBornoMap}
A function $f: X \to Y$ of metric spaces is $C$-\textbf{bornologous}, where $C:[0,\infty)\to [0,\infty)$, if the image of every $r$-bounded set in $X$ is $C(r)$-bounded. $f$ is \textbf{bornologous} if it is $C$-\textbf{bornologous} for some function $C:[0,\infty)\to [0,\infty)$. $f$ is \textbf{$(a,b)-$Lipschitz} if it is $C$-bornologous for  $C(r)=ar+b$.

A function $f\colon X \to Y$ of metric spaces is \textbf{$R-$close} to $g\colon X \to Y$,  if $d(f(x),g(x))\leq R, \forall x\in X.$ $f$ is \textbf{close} to $g$ if it is $R-$close to $g$ for some $R>0$.

A bornologous function $f\colon X \to Y$ of metric spaces is a \textbf{coarse equivalence} if there exists a bornologous function $g\colon Y\to X$, so that $f \circ g$ is close to the identity $id_Y$ and that $g \circ f$ is close to the identity $id_X$.
\end{Definition}

\begin{Theorem}\label{ProjectionEquivThm}
Suppose $X$ is a metric space such that $d(x,y) \ge 1$ if $x\ne y$. If $\sim$ is an equivalence relation whose equivalence classes are uniformly bounded, then the projection $p:X\to X/\sim$ is a coarse equivalence if $X/\sim$ is equipped with the Hausdorff metric $d_H$.
\end{Theorem}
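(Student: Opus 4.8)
The plan is to construct an explicit coarse inverse $g\colon X/\sim\ \to X$ and verify directly the four conditions in the definition of a coarse equivalence. First I would fix, once and for all, a uniform bound $D$ on the diameters of the equivalence classes; this exists by the hypothesis that the classes are uniformly bounded, and every estimate below will be expressed in terms of $D$. I would then define $g$ by choosing one representative in each equivalence class, so that $g([x])\in[x]$ for every class $[x]$. With this choice, $p\circ g = id_{X/\sim}$ on the nose, so that composite is $0$-close to the identity, and $g\circ p(x)=g([x])$ lies in $[x]$, whence $d\big(x,g(p(x))\big)\le \diam([x])\le D$, giving that $g\circ p$ is $D$-close to $id_X$. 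The remaining task is to check that both $p$ and $g$ are bornologous.

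Before the coarse estimates, I would record why $d_H$ really is a metric on $X/\sim$, since this is the one place the separation hypothesis $d(x,y)\ge 1$ for $x\ne y$ is used. Distinct classes $[x]\ne[x']$ are disjoint, so every point of $[x]$ is at distance at least $1$ from every point of $[x']$, forcing $d_H([x],[x'])\ge 1>0$; finiteness of $d_H$ on $X/\sim$ follows from uniform boundedness of the classes together with the triangle inequality. Thus $d_H$ separates points and the quotient is a genuine metric space, not merely a pseudometric space.

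For bornologousness of $p$: if $A$ is $r$-bounded and $a,a'\in A$, then for any $u\in[a]$ the triangle inequality gives $d(u,[a'])\le d(u,a)+d(a,a')\le D+r$, and symmetrically with the roles of $[a]$ and $[a']$ exchanged, so $d_H([a],[a'])\le D+r$; hence $p$ is $C$-bornologous with $C(r)=r+D$. For bornologousness of $g$: if $\{[x],[x']\}$ is $s$-bounded, set $u=g([x])$, so $d(u,[x'])\le s$; since $g([x'])$ lies within $D$ of every point of $[x']$, a triangle-inequality argument yields $d\big(g([x]),g([x'])\big)\le D+s$, so $g$ is bornologous with the same control. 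Combining the four verifications shows $p$ is a coarse equivalence.

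Every step here is essentially bookkeeping with the triangle inequality and the uniform diameter bound $D$, so there is no serious computational obstacle. The only point demanding genuine care is conceptual: the separation hypothesis is needed not for any of the coarse estimates but solely to guarantee that $d_H$ distinguishes distinct equivalence classes, so that the statement ``$X/\sim$ equipped with the Hausdorff metric'' is meaningful. I would flag making that dependence explicit as the main thing to get right.
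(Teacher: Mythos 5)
Your proposal is correct and takes essentially the same approach as the paper: both construct a selection function $s\colon X/\sim\ \to X$ as the coarse inverse and derive all estimates from the triangle inequality plus the uniform diameter bound, the paper merely packaging your two separate bornologousness checks into the single sandwich inequality $d(x,y)-2R \leq d_H([x],[y])\leq d(x,y)+2R$, which makes both $p$ and $s$ $(1,2R)$-Lipschitz at once. Your explicit observation that the hypothesis $d(x,y)\ge 1$ for $x\ne y$ is used only to guarantee that $d_H$ separates distinct classes (so that $X/\sim$ is a genuine metric space rather than a pseudometric one) is left implicit in the paper and is a worthwhile addition.
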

\begin{proof}
Suppose each equivalence class is of diameter less than $R$. Notice 
$$ d(x,y)-2R  \leq d_H([x],[y])\leq d(x,y)+2R$$
for all $x,y\in X$. That means that $p$ is $(1,2R)$-Lipschitz and any selection function $s:X/\sim\to X$ is $(1,2R)$-Lipschitz as well. Since $s\circ p$ is $R$-close to the identity $id_X$ and $p\circ s$ is the identity on $X/\sim$, both $p$ and $s$ are coarse equivalences.
\end{proof}

\section{Coarsely $n$-to-1 functions}

\begin{Definition}[Condition $B_n$ of \cite{MV}] \label{DefNto1Map}
A bornologous function $f: X \to Y$ of metric spaces is \textbf{coarsely $n$-to-1} (with \textbf{control} $C$) if there is a function $C: [0,\infty)\to [0,\infty)$ so that for each subset $B$ of $Y$ with $\diam(B) \leq r$, $f^{-1}(B) = \bigcup\limits_{i=1}^n A_i$ for some subsets $A_i$ of $X$ with $\diam(A_i) < C(r)$ for $i = 1, \ldots, n$.
\end{Definition}

See \cite{DV} for other conditions equivalent to $f$ being coarsely $n$-to-1.

An example of a coarse $n$-to-1 map is $z\mapsto z^n$ in the complex plane. Here is a more general case:

\begin{Example}
If a finite group $G$ acts on a metric space $X$ by bornologous functions, then the projection $p:X\to X/G$ is
coarsely $|G|$ to 1 if $X/G$ is given the Hausdorff metric.
\end{Example}
\begin{proof} Change the original metric $\rho$ on $X$ to
$$d(x,y):=\sum\limits_{g\in G} \rho(g\cdot x,g\cdot y)$$
and notice it is coarsely equivalent to $\rho$. Therefore, the Hausdorff metrics induced by both $d$ and $\rho$ on $X/G$ are coarsely equivalent. Notice $G$ acts on $X$ via isometries with respect to the metric $d$. That means we can reduce our proof to the case of $G$ acting on $X$ by isometries.

First, notice that $p\colon X \to X/G$ is $1$-Lipschitz, hence bornologous. Indeed, if $d(x,y) < r$ and $z\in G\cdot y$, then
$z=g\cdot y$ for some $g\in G$ and $d(g\cdot x,g\cdot y)=d(x,y) < r$. That means $z\in B(G\cdot x,r)$ and
$d(G\cdot x,G\cdot y)\leq d(x,y)$.

Given $x\in X$ and given $r > 0$ the point-inverse $p^{-1}(B(g\cdot x,r))$ is contained in $B(G\cdot x,r)$ which is clearly the union $\bigcup\limits_{y\in G\cdot x} B(y,r)$ of at most $|G|$ many sets of diameter at most $2\cdot r$.
\end{proof}

It is easy to check that the property of being coarsely $n$-to-1 is a coarse property: if two maps are coarsely equivalent and one of them is coarsely $n$-to-1 then the other one is coarsely $n$-to-1 as well. Furthermore, the composition of a coarsely $n$-to-1 map with a coarse equivalence (from the left or from the right) is coarsely $n$-to-1.

\begin{Definition}
A set $A$ is  \textbf{$R$-connected} if for every pair of points $x,y\in A$ there exist points $x_0=x, x_1, \ldots, x_k=y$ in $A$ for which $d(x_i,x_{i+1})\leq R$. An \textbf{$R$-component} is a maximal $R$-connected set.
\end{Definition}

\begin{Lemma}\label{FibersOfcoarselynTo1Functions}
If a function $f: X \to Y$ of metric spaces is coarsely $n$-to-1 with control $C$, then for each subset $B$ of $Y$ with $\diam(B) \leq r$, the number of $R$-components of $f^{-1}(B)$, $R\ge C(r)$, is at most $n$ and each $R$-component has diameter at most $2n\cdot R$.
\end{Lemma}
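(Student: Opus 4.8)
The plan is to read off everything from the defining decomposition. Write $f^{-1}(B)=\bigcup_{i=1}^n A_i$ with $\diam(A_i)<C(r)\le R$, as provided by Definition~\ref{DefNto1Map}. Since $\diam(A_i)<R$, any two points of $A_i$ are within distance $R$, so each $A_i$ is $R$-connected and therefore lies inside a single $R$-component of $f^{-1}(B)$. As every point of $f^{-1}(B)$ belongs to some $A_i$, each $R$-component contains at least one of the (at most $n$) sets $A_i$; assigning to each index $i$ the $R$-component containing $A_i$ gives a surjection from an at-most-$n$-element set onto the collection of $R$-components, so there are at most $n$ of them. This settles the first claim with essentially no work.

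For the diameter bound I would fix one $R$-component $P$ and let $A_{i_1},\dots,A_{i_k}$ (with $k\le n$) be exactly those $A_i$ contained in $P$. Form the graph whose vertices are these sets, joining $A_{i_a}$ and $A_{i_b}$ by an edge whenever they contain points at distance at most $R$. The key step is to show that this graph is connected. If it were not, its vertex set would split into two nonempty families, and the corresponding unions $U$ and $V$ would satisfy $d(u,v)>R$ for all $u\in U$ and $v\in V$; but $P=U\cup V$ is $R$-connected, so an $R$-chain from a point of $U$ to a point of $V$ must contain consecutive points $x_j\in U$, $x_{j+1}\in V$ with $d(x_j,x_{j+1})\le R$, a contradiction.

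Finally I would convert connectivity into a metric estimate. Given $x,y\in P$, say $x\in A_{i_a}$ and $y\in A_{i_b}$, choose a simple path $B_0,\dots,B_m$ in the graph from $A_{i_a}$ to $A_{i_b}$; being simple on at most $k\le n$ vertices, it uses $m\le n-1$ edges. For each edge pick witnessing points within distance $R$, and bound $d(x,y)$ by summing the within-set traversals (one per vertex $B_0,\dots,B_m$, each at most $\diam(B_j)<R$) and the between-set jumps (one per edge, each at most $R$). This yields $d(x,y)\le (m+1)R+mR=(2m+1)R\le (2n-1)R\le 2n\cdot R$, as required. The only genuinely delicate point is the connectivity argument of the previous paragraph; once that is in place the estimate is a routine telescoping over a path of length at most $n-1$.
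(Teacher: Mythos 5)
Your proof is correct and takes essentially the same route as the paper: the count of $R$-components is read off the decomposition $f^{-1}(B)=\bigcup_{i=1}^n A_i$ exactly as in the paper's proof, and your diameter estimate is the same one the paper obtains, only packaged differently. Where you build an adjacency graph on the $A_i$'s, prove it is connected, and telescope along a simple path, the paper simply takes an $R$-chain between two points of a component and shortens it whenever it revisits some $A_i$, arriving at a chain of at most $2n$ points and the identical bound $(2n-1)\cdot R \le 2n\cdot R$.
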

\begin{proof}
Given a subset $B$ of $Y$ with $\diam(B) \leq r$, express $f^{-1}(B)$ as
$\bigcup\limits_{i=1}^n A_i$ for some subsets $A_i$ of $X$ with $\diam(A_i) \leq C(r)$ for $i = 1, \ldots, n$.
Consider $R\ge C(r)$ and pick an $R$-component $A$ of $f^{-1}(B)$. Notice $A$ is the union of some sets among the family $\{A_i\}_{i=1}^n$. Also, every two points in $A$ can be connected by an $R$-chain of points. If that chain contains two points from the same set $A_i$, then it can be shortened by eliminating all points between them. That means there is an $R$-chain that has at most $2n$ points and $\diam(A)\leq 2n\cdot R$.
\end{proof}

\begin{Lemma}[Lemma 3.6 of \cite{MV}]\label{LemmaMiyataVirk}
Suppose $f: X \to Y$ is coarsely n to 1 with control $C$. Then for every cover $\mathcal{U}$ of $X$ and for every $r>0$ we have
	$$
\dim_r(f(\mathcal{U})) \leq (\dim_{C(r)}(\mathcal{U})+1) \cdot n-1
	$$
\end{Lemma}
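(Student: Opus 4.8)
The plan is to recast the bound on $\dim_r(f(\mathcal U))$ as a bound on multiplicity: since the $r$-multiplicity of a family is its $r$-dimension plus one, it suffices to show that no point of $Y$ can lie in more than $(\dim_{C(r)}(\mathcal U)+1)\cdot n$ of the neighborhoods $B(f(U),r)$, $U\in\mathcal U$. So I would fix a point $y\in Y$ together with members $U_1,\dots,U_k\in\mathcal U$ having pairwise distinct images, all of whose $r$-neighborhoods $B(f(U_j),r)$ contain $y$, and aim to prove $k\le (\dim_{C(r)}(\mathcal U)+1)\cdot n$.

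For each $j$ the condition $y\in B(f(U_j),r)$ produces a witness $u_j\in U_j$ with $d(y,f(u_j))<r$; in particular every $f(u_j)$ lies in the ball $B(y,r)$, so all of $u_1,\dots,u_k$ lie in the single point-inverse $f^{-1}(B(y,r))$ of a bounded subset of $Y$. This is exactly the situation the coarsely $n$-to-1 hypothesis is designed for: applying Definition \ref{DefNto1Map} to $B(y,r)$ I would write $f^{-1}(B(y,r))=A_1\cup\dots\cup A_n$ with each $A_i$ of diameter less than $C(r)$, and thereby split the index set $\{1,\dots,k\}$ into $n$ blocks $J_i=\{\,j\mid u_j\in A_i\,\}$.

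The final step is a representative-and-multiplicity argument inside each block. Choosing one index $j_0\in J_i$, for every $j\in J_i$ the points $u_{j_0}$ and $u_j$ lie in the common set $A_i$, so $d(u_{j_0},u_j)<C(r)$; since $u_j\in U_j$ this says $u_{j_0}\in B(U_j,C(r))$. Hence the single point $u_{j_0}$ lies in all $|J_i|$ of the distinct $C(r)$-neighborhoods $\{\,B(U_j,C(r))\mid j\in J_i\,\}$, which forces $|J_i|\le \dim_{C(r)}(\mathcal U)+1$ by the definition of dimension at a scale. Summing over the $n$ blocks gives $k=\sum_{i=1}^n|J_i|\le (\dim_{C(r)}(\mathcal U)+1)\cdot n$, which is the desired bound.

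The part that needs the most care is the diameter bookkeeping: the witnesses only satisfy $d(f(u_i),f(u_j))<2r$, so the set whose preimage I feed into the coarse $n$-to-1 condition is a ball of radius $r$, and one must check that this matches the normalization of the control $C$ in Definition \ref{DefNto1Map} rather than introducing a spurious factor of two. Beyond that, the only things to verify are the routine transfer of distinctness (distinct images $f(U_j)$ force distinct $U_j$, so the neighborhoods counted within a block are genuinely distinct) and the elementary observation that a point meeting $|J_i|$ members of $B(\mathcal U,C(r))$ witnesses $\dim_{C(r)}(\mathcal U)\ge |J_i|-1$.
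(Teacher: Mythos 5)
Your argument is correct and is essentially the paper's own proof: both use the coarsely $n$-to-1 condition to split the preimage of a small subset of $Y$ into $n$ pieces of diameter less than $C(\cdot)$, and then observe that each piece can meet at most $\dim_{C(r)}(\mathcal{U})+1$ elements of $\mathcal{U}$ (your representative $u_{j_0}$ argument is exactly the proof of that observation), summing over the $n$ pieces. The factor-of-two normalization you flag is equally glossed over in the paper --- its proof counts elements of $f(\mathcal{U})$ meeting sets of diameter at most $r$, while the definition of $\dim_r$ requires handling balls $B(y,r)$ of diameter up to $2r$ --- and it is harmless in both cases, since it only amounts to replacing the control $C(r)$ by $C(2r)$.
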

\begin{proof}
Assume $\dim_{C(r)}(\mathcal{U})=m < \infty$.
Let $C$ be a control function of $f$. We may assume $C(r)\to\infty$ as $r\to\infty$.
 Let's count the number of elements of $f(\mathcal{U})$
that intersect a given set $B$ of diameter at most $r$. It can be estimated from above by the sum of the numbers of elements of $\mathcal{U} $ intersected by sets $A_i$, where $f^{-1}(B)$ is expressed as
$\bigcup\limits_{i=1}^n A_i$ for some subsets $A_i$ of $X$ with $\diam(A_i) \leq C(r)$ for $i = 1, \ldots, n$.  Each of those sets intersects at most $(m+1)$ elements of $\mathcal{U}$, so the total estimate for $f^{-1}(B)$ is $n\cdot (m+1)$. That proves $\dim_r(f(\mathcal{U})) \leq (\dim_{C(r)}(\mathcal{U})+1) \cdot n-1$.
\end{proof}

\begin{Corollary} [Miyata-Virk \cite{MV}]
Suppose $f: X \to Y$ is a surjective function  of metric spaces that is coarsely $n$-to-1 for some $n\ge 1$.
If $asdim(X)$ is finite, then $asdim(Y) \leq n\cdot (asdim(X)+1)-1$.
\end{Corollary}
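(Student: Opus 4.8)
The plan is to prove the bound through the scale-by-scale characterization of asymptotic dimension recorded earlier: namely, $asdim(Y)\le k$ precisely when for every scale $r>0$ the space $Y$ admits a uniformly bounded cover of $r$-dimension at most $k$. So, writing $m:=asdim(X)<\infty$ and letting $C$ be a control function of $f$, it suffices to produce, for each fixed $r>0$, a uniformly bounded cover $\mathcal{V}$ of $Y$ with $\dim_r(\mathcal{V})\le n(m+1)-1$.

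To build such a $\mathcal{V}$, the first thing I would do is rescale: the relevant scale on the $X$ side is $C(r)$ rather than $r$. Applying the same characterization to $X$ at scale $C(r)$, finiteness of $asdim(X)$ yields a uniformly bounded cover $\mathcal{U}$ of $X$ with $\dim_{C(r)}(\mathcal{U})\le m$. I then push this cover forward and set $\mathcal{V}:=f(\mathcal{U})=\{f(U)\mid U\in\mathcal{U}\}$.

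It remains to check that $\mathcal{V}$ has the three required features. First, since $f$ is surjective, $\mathcal{V}$ is a cover of $Y$. Second, since $f$ is coarsely $n$-to-1 it is in particular bornologous, so it carries the uniformly bounded family $\mathcal{U}$ to a uniformly bounded family $\mathcal{V}$. Third, and this is where the real content sits, Lemma \ref{LemmaMiyataVirk} gives directly
$$\dim_r(\mathcal{V})=\dim_r(f(\mathcal{U}))\le(\dim_{C(r)}(\mathcal{U})+1)\cdot n-1\le(m+1)\cdot n-1.$$
Feeding this back into the characterization of asymptotic dimension for $Y$ yields $asdim(Y)\le n(m+1)-1$, as claimed.

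Since Lemma \ref{LemmaMiyataVirk} already does the combinatorial heavy lifting, I do not expect a genuine obstacle here; the argument is essentially bookkeeping. The only points demanding care are the two hypotheses that are easy to overlook: surjectivity, without which $\mathcal{V}$ need not cover $Y$, and the replacement of the scale $r$ by $C(r)$ when selecting the cover of $X$, so that the control function lines up with the estimate supplied by the lemma.
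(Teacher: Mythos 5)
Your proposal is correct and matches the paper's intent exactly: the paper states this result as an immediate corollary of Lemma \ref{LemmaMiyataVirk}, and the implicit argument is precisely yours --- take a uniformly bounded cover $\mathcal{U}$ of $X$ with $\dim_{C(r)}(\mathcal{U})\le asdim(X)$, push it forward by the surjective bornologous map $f$, and apply the lemma to bound $\dim_r(f(\mathcal{U}))$. Your attention to the scale shift from $r$ to $C(r)$ and to the role of surjectivity is exactly the right bookkeeping.
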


\begin{Definition}
$f:X\to Y$ is \textbf{coarsely surjective} if $Y\subset B(f(X),R)$ for some $R > 0$.
\end{Definition}

\begin{Proposition}
[Structure of coarsely n to 1 maps]
Every coarsely $n$-to-1 map $f: X\to Y$ factors as $f=q\circ p$, where $p: X \to Z$ is a coarse equivalence, $q: Z \to Y$ is coarsely $n$ to $1$, and $q^{-1}(y)$ has at most $n$ points for each $y\in Y$.
\end{Proposition}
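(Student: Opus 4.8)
The plan is to realize $Z$ as a subspace of $X$ obtained by collapsing each fiber of $f$ to a uniformly bounded, uniformly finite set of representatives. Fix a control function $C$ for $f$ and choose $R\ge C(0)$ with $R>0$. For every $y\in Y$ the fiber $f^{-1}(y)$ satisfies $\diam(\{y\})=0\le R$, so Lemma \ref{FibersOfcoarselynTo1Functions} guarantees that $f^{-1}(y)$ has at most $n$ many $R$-components, each of diameter at most $2nR$. From each $R$-component of each nonempty fiber I would select one point; let $Z\subseteq X$ be the set of all chosen representatives, equipped with the metric inherited from $X$. Since fibers are disjoint and the $R$-components within a fiber are disjoint, the representatives are distinct and each determines its component uniquely.

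Next I would define $p\colon X\to Z$ by sending $x$ to the representative of the $R$-component of $f^{-1}(f(x))$ that contains $x$, and let $\iota\colon Z\hookrightarrow X$ be the inclusion. Because $x$ and $p(x)$ lie in a common $R$-component, $d(x,p(x))\le 2nR$, so $\iota\circ p$ is $2nR$-close to $id_X$; and $p\circ\iota=id_Z$ since $p$ fixes every representative. The triangle inequality gives $d(p(x),p(x'))\le d(x,x')+4nR$, so $p$ is $(1,4nR)$-Lipschitz, while $\iota$ is an isometric embedding; hence $p$ is a coarse equivalence with coarse inverse $\iota$. This is exactly the phenomenon of Theorem \ref{ProjectionEquivThm}, here realized inside $X$ so that no quotient metric is needed.

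Finally I would set $q:=f|_Z=f\circ\iota\colon Z\to Y$. For each $x$ the point $p(x)$ lies in the same fiber as $x$, so $q(p(x))=f(p(x))=f(x)$ and therefore $f=q\circ p$. The set $q^{-1}(y)$ consists of exactly the representatives lying in $f^{-1}(y)$, one per $R$-component, so it has at most $n$ points. Moreover $q$ is coarsely $n$-to-1: for $B\subseteq Y$ with $\diam(B)\le r$, writing $f^{-1}(B)=\bigcup_{i=1}^n A_i$ with $\diam(A_i)<C(r)$ yields $q^{-1}(B)=\bigcup_{i=1}^n (Z\cap A_i)$, a union of $n$ sets of diameter less than $C(r)$, so $C$ is again a control for $q$. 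Equivalently, $q=f\circ\iota$ is the composition of a coarsely $n$-to-1 map with a coarse equivalence and is therefore coarsely $n$-to-1 by the coarse invariance noted after Definition \ref{DefNto1Map}.

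The bulk of the argument is really the extraction of the uniformly bounded, uniformly finite decomposition of the fibers, which is precisely Lemma \ref{FibersOfcoarselynTo1Functions}; once $Z$ is built this way everything else is formal. The one genuine technical worry is that the naive construction, namely the quotient $X/\sim$ by the relation ``same $R$-component of the same fiber'' with the Hausdorff metric, could a priori produce only a pseudometric (distinct classes at Hausdorff distance $0$). Realizing $Z$ as a subspace of $X$ via chosen representatives sidesteps this issue entirely, after which the verifications that $p$ is a coarse equivalence and that restriction preserves the coarsely $n$-to-1 property are routine.
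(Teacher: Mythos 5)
Your proof is correct, and it runs on the same engine as the paper's: Lemma \ref{FibersOfcoarselynTo1Functions} applied to singletons gives the partition of each fiber into at most $n$ many $R$-components of diameter at most $2nR$, and $Z$ is obtained by collapsing each component to a point. Where you genuinely diverge is in how $Z$ is realized. The paper first replaces the metric by $d(x,y)=\max(1,\rho(x,y))$ for $x\ne y$, so that distinct points are at distance at least $1$, then forms the quotient $X/\sim$ (same fiber, same $R$-component) with the Hausdorff metric and invokes Theorem \ref{ProjectionEquivThm} to conclude that the projection is a coarse equivalence; you instead choose one representative per component, take $Z\subset X$ with the subspace metric, and verify the coarse equivalence by hand ($\iota\circ p$ is $2nR$-close to $id_X$, $p\circ\iota=id_Z$, $p$ is $(1,4nR)$-Lipschitz). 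Your closing remark pinpoints exactly the issue the paper's metric change is designed to fix: after the normalization, distinct classes are at Hausdorff distance at least $1$, so the quotient is a genuine metric space and Theorem \ref{ProjectionEquivThm} applies. Your selection construction sidesteps that normalization entirely, at the cost of an explicit appeal to choice --- which the paper also makes, only hidden inside the proof of Theorem \ref{ProjectionEquivThm} via the selection function $s$. The two constructions give coarsely equivalent models of $Z$, and your direct verification that $q=f|_Z$ inherits the control $C$ (by intersecting the sets $A_i$ with $Z$) is a clean, slightly more explicit replacement for the paper's remark that $q$ is coarsely $n$-to-1 because it differs from $f$ by a coarse equivalence. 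In short: same decomposition and same key lemma, but your route is self-contained and avoids both the Hausdorff metric and the metric-change trick, while the paper's route buys brevity by reusing Theorem \ref{ProjectionEquivThm}.
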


\begin{proof}
Given a metric $\rho$ on $X$ change it to the one defined by $d(x,y)=\max(1,\rho(x,y))$ if $x\ne y$, and note it is coarsely equivalent to $\rho$.
Using Lemma \ref{FibersOfcoarselynTo1Functions} find $R > 0$ so that $R$-components of fibers $f^{-1}(\{y\})$ of $f$
have diameter at most $2n\cdot R$ and there are at most $n$ of them.
Define an equivalence relation $\sim$ on $X$ as follows: $x\sim z$ if and only if $f(x)=f(z)$ and both $x$ and $z$
belong to the same $R$-component of their fiber.
By Theorem \ref{ProjectionEquivThm} the projection $p:X\to X/\sim$ is a coarse equivalence if $X/\sim$ is equipped with the Hausdorff metric. Obviously, there is $q:X/\sim\to Y$ such that $f=q\circ p$ and each fiber of $q$ has at most $n$ elements. As $p$ is a coarse equivalence, $q$ is bornologous and coarsely $n$-to-1.
\end{proof}

\begin{Proposition}\label{PropTransferCoverProperties}
Suppose $f: X \to Y$ is coarsely $n$-to-1 with control $D$ and coarse with control $E$. Suppose $\mathcal U$ is a cover of $X$.

\begin{enumerate}
	\item If $\mathcal U$ is $b$-bounded and of $D(r)$-dimension $m$ then $f(\mathcal U)$ 	is 	 $E(b)$-bounded of $r$-dimension at most $(m+1) \cdot n$;
	\item If $\mathcal U$ is $b$-bounded and of $D(r)$-dimension $m$ then there exists
	$(E(b)+r)$-bounded cover ${\mathcal{V}}=\bigcup\limits_{i=1}^{n (m+1)}{\mathcal{V}}^i$ of
 	$f(X)$ so that each $\mathcal V ^i$ is a $\frac{r}{(n \cdot (m+1))}$-disjoint family.
\end{enumerate}
\end{Proposition}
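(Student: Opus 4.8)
The plan is to read part (1) off the machinery already assembled, and then to obtain part (2) by feeding the cover produced in part (1) into Lemma \ref{LemmaMultToDisjointness}. For part (1), boundedness is immediate from $f$ being coarse with control $E$: any two points of a set $U\in\mathcal U$ lie within $b$ of one another, so (as in Lemma \ref{LemCoarseMapPullBackDisjointness}) their images lie within $E(b)$, whence $\diam f(U)\le E(b)$ and $f(\mathcal U)$ is $E(b)$-bounded. For the $r$-dimension I would invoke Lemma \ref{LemmaMiyataVirk} with the coarsely $n$-to-$1$ control $D$ in the role of $C$: since $\dim_{D(r)}(\mathcal U)=m$, it gives
$$\dim_r(f(\mathcal U))\le(\dim_{D(r)}(\mathcal U)+1)\cdot n-1=(m+1)\cdot n-1\le(m+1)\cdot n,$$
which is the asserted bound.

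For part (2) I would apply Lemma \ref{LemmaMultToDisjointness} to the cover $f(\mathcal U)$ of the metric space $f(X)$, at scale $r$ and with dimension bound $N:=(m+1)\cdot n-1$, the sharp value just produced. That lemma decomposes $f(\mathcal U)$ into $N+1=n\cdot(m+1)$ subfamilies $\mathcal V^1,\dots,\mathcal V^{n(m+1)}$, each $\frac{r}{N+1}=\frac{r}{n\cdot(m+1)}$-disjoint, precisely as claimed; their union is a cover of $f(X)$ because $\mathcal U$ covers $X$. Part (2) of the same lemma then controls boundedness: applied to the $E(b)$-bounded cover $f(\mathcal U)$ at scale $r$ it returns an $(E(b)+2r)$-bounded family.

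The argument is short, and the only points needing care are the constants. First, it is essential to carry the sharp bound $(m+1)\cdot n-1$ from Lemma \ref{LemmaMiyataVirk} into part (2): had we used the rounded bound $(m+1)\cdot n$, Lemma \ref{LemmaMultToDisjointness} would yield $n(m+1)+1$ families disjoint only at scale $\frac{r}{n(m+1)+1}$, missing both the stated count and the stated disjointness constant. Second, Lemma \ref{LemmaMultToDisjointness} is stated for dimension at least $1$, so the degenerate case $m=0,\ n=1$ (where $N=0$) must be treated by hand; but there $\dim_r(f(\mathcal U))=0$ forces the $r$-balls of the members of $f(\mathcal U)$ to be pairwise disjoint, so $f(\mathcal U)$ is already a single $r$-disjoint, $E(b)$-bounded family and the conclusion holds with $\mathcal V=\mathcal V^1=f(\mathcal U)$. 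The one genuinely delicate point is the additive boundedness term: the direct application above produces an $(E(b)+2r)$-bound, so matching it to the stated $(E(b)+r)$ is the sole piece of bookkeeping that deserves a second look.
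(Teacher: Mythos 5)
Your proof is correct and is essentially the paper's own argument fleshed out: the paper's entire proof reads ``(1) follows from Lemma \ref{LemmaMiyataVirk}; for (2) use (1) and Lemma \ref{LemmaMultToDisjointness}'', and you have supplied exactly the missing details, including the key point that one must carry the sharp bound $(m+1)\cdot n-1$ (not the rounded $(m+1)\cdot n$ stated in part (1)) into the application of Lemma \ref{LemmaMultToDisjointness}, as well as the degenerate case $m=0$, $n=1$ which the paper silently ignores. Your closing worry about the additive constant is justified but is not a defect of your argument: applying Lemma \ref{LemmaMultToDisjointness}(2) at scale $r$ genuinely yields only $(E(b)+2r)$-bounded families, so the $(E(b)+r)$ in the statement appears to be a slip in the paper itself, harmless in its later uses (e.g.\ in Theorem \ref{ThmMSP}, where only the existence of some uniform bound matters).
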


\begin{proof}
(1) follows from Lemma \ref{LemmaMiyataVirk}.

For (2) use (1) and Lemma \ref{LemmaMultToDisjointness}.
\end{proof}

\section{Asymptotic dimension of functions}\label{ASDIMOfFunctions}
The well-known Hurewicz Theorem for maps (also known as
Dimension-Lowering Theorem, see~\cite[Theorem 1.12.4 on
p.109]{Engel}) says $\dim(X)\leq \dim(f)+\dim(Y)$ if $f: X\to
Y$ is a closed map of separable metric spaces and $\dim(f)$ is
defined as the supremum of $\dim(f^{-1}(y))$, $y\in Y$. Bell and
Dranishnikov \cite{BellDranish A Hurewicz Type} proved a variant
of the Hurewicz Theorem for asymptotic dimension without defining
the asymptotic dimension of a function. However, Theorem 1 of
\cite{BellDranish A Hurewicz Type} may be restated as
$\asdim(X)\leq \asdim(f)+\asdim(Y)$, where $\asdim(f)$ is the
smallest integer $n$ such that $\asdim(f^{-1}(B_R(y)))\leq n$
uniformly for all $R
> 0$. 

\begin{Definition}
Given a function $f: X\to Y$ of metric spaces, its \textbf{asymptotic dimension} $\asdim(f)$ is the supremum of $\asdim(A)$
such that $A\subset X$ and $\asdim(f(A))=0$.
\end{Definition}

In \cite{BellDranish A Hurewicz Type} there is a concept of a
family $\{X_{\alpha}\}$ of subsets of $X$ satisfying
$\asdim(X_{\alpha}) \le n$ uniformly. Notice that in our language
this means there is one function that serves as an $n$-dimensional
control function for all $X_{\alpha}$.

\begin{Corollary}[Bell-Dranishnikov \cite{BellDranish A Hurewicz
Type}]\label{HurewiczTypeOfBellD} Let $f: X \to Y$ be a
Lipschitz function of metric spaces. Suppose that, for every $R
>0$,
$$\asdim\{f^{-1}(B_R (y))\} \le n$$
uniformly (in $y \in Y$). If $X$ is geodesic, then $\asdim(X) \le
\asdim(Y) +n$.
\end{Corollary}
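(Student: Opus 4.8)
The plan is to read the hypothesis as nothing more than the statement that $\asdim(f)\le n$ in the sense under which Theorem~1 of \cite{BellDranish A Hurewicz Type} was restated above, and then to quote that restatement. Indeed, the assumption that $\asdim\{f^{-1}(B_R(y))\}\le n$ holds uniformly in $y$ for every $R>0$ is verbatim the condition defining the integer $\asdim(f)$ appearing in the inequality $\asdim(X)\le \asdim(f)+\asdim(Y)$; so the content of the corollary is simply to instantiate that inequality.

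The steps are therefore brief. First I would record that the hypothesis says precisely $\asdim(f)\le n$. Next I would invoke the restated inequality $\asdim(X)\le \asdim(f)+\asdim(Y)$, whose validity for geodesic $X$ is exactly what the geodesic hypothesis supplies, to obtain $\asdim(X)\le n+\asdim(Y)$. If $\asdim(Y)=\infty$ there is nothing to prove, so one may assume $\asdim(Y)$ finite; in that case the two displayed quantities are finite integers and the conclusion is immediate.

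The genuine mathematical weight sits entirely in the restated inequality, which is Bell--Dranishnikov's Hurewicz-type theorem, and I expect its geodesic hypothesis to be the point to guard most carefully---this is the reason the corollary is stated for geodesic $X$. A secondary subtlety, which one should at least check for consistency, is that the ball-preimage formulation of $\asdim(f)$ used in the restatement agrees with the intrinsic Definition given just above, namely the supremum of $\asdim(A)$ over subsets $A\subset X$ with $\asdim(f(A))=0$. The nontrivial direction bounds $\asdim(A)$ for such an $A$: at each scale one decomposes $f(A)$ into a uniformly bounded $R$-disjoint family (available since $\asdim(f(A))=0$), pulls it back through $f$ so that by Lemma~\ref{LemCoarseMapPullBackDisjointness} and the Lipschitz bound the pieces stay disjoint at a controlled scale, notes that each piece lies in the preimage of a set of controlled diameter and hence inside some $f^{-1}(B_{R'}(y))$, and then applies the hypothesis to get $\asdim\le n$ on every piece uniformly; a union theorem for asymptotic dimension assembles these into $\asdim(A)\le n$. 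This reconciliation is what lets one pass freely between the two descriptions of $\asdim(f)$ when applying the inequality.
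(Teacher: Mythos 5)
Your reading is exactly right: the paper offers no proof of this Corollary at all --- it is simply Bell--Dranishnikov's Theorem 1, cited as such, which the preceding paragraph restates as $\asdim(X)\le \asdim(f)+\asdim(Y)$, and your argument just unwinds that restatement (with its Lipschitz and geodesic hypotheses) back to the original form. The consistency check you sketch between the ball-preimage formulation and the paper's intrinsic Definition of $\asdim(f)$ is a sensible precaution but is not something the paper carries out or relies on here.
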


\begin{Corollary}
If $f$ is a coarsely $n$-to-1 function, then $\asdim(f)=0$.
\end{Corollary}
\begin{proof}
Let $C$ be a control function of $f$.
By \ref{FibersOfcoarselynTo1Functions}, for each subset $B$ of $Y$ with $\diam(B) \leq r$ and for each $R > C(r)$, the number of $R$-components of $f^{-1}(B)$ is at most $n$ and each $R$-component has diameter at most $n\cdot R$. The last fact is sufficient to conclude $\asdim(f)\leq 0$.
\end{proof}

\section{Preservation of coarse properties}

The last several sections will be devoted to the issue of preservation of coarse invariants by coarsely $n$-to-1 functions: MSP,  finite decomposition complexity, countable asymptotic dimension, Asymptotic Property C, and Property A.

\begin{Theorem}\label{ThmAsDim}
 If $f:X\to Y$ is coarsely $n$-to-1, coarse, and coarsely surjective, then $asdim X \leq asdim Y\leq (asdim X +1)n-1$.
\end{Theorem}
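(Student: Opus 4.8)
The plan is to prove the two inequalities separately, since they have quite different characters. The right-hand inequality $\asdim Y \leq (\asdim X + 1)n - 1$ is essentially the Miyata--Virk corollary already recorded above, so I would simply invoke it. Concretely, if $\asdim X = m < \infty$, then for a given scale $r > 0$ I use the control $D$ of $f$ to find a uniformly bounded cover $\mathcal{U}$ of $X$ with $\dim_{D(r)}(\mathcal{U}) \leq m$. Applying Proposition \ref{PropTransferCoverProperties}(1), the image family $f(\mathcal{U})$ is uniformly bounded (by $E$-boundedness) and has $\dim_r(f(\mathcal{U})) \leq (m+1)n - 1$. Coarse surjectivity lets me enlarge $f(\mathcal{U})$ slightly to a cover of all of $Y$ without increasing the dimension beyond control, so $\asdim Y \leq (m+1)n - 1$ as claimed.

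For the left-hand inequality $\asdim X \leq \asdim Y$, the idea is to pull covers back through $f$. Suppose $\asdim Y = k$. Given a scale $d > 0$, I want a uniformly bounded cover of $X$ with controlled $d$-dimension. The natural move is to take a uniformly bounded cover $\mathcal{V}$ of $Y$ with $\dim_{E(d)}(\mathcal{V}) \leq k$ and pull it back via $f^{-1}$. By Lemma \ref{LemCoarseMapPullBackDisjointness}, applied after first decomposing $\mathcal{V}$ into $E(d)$-disjoint subfamilies using Lemma \ref{LemmaMultToDisjointness}, the preimages form $d$-disjoint families; Observation \ref{DisUnionsObs} then bounds the $d$-dimension of the pulled-back cover. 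The point where coarse $n$-to-1-ness enters is boundedness: the preimage $f^{-1}(V)$ of a bounded set need not be bounded, but by Lemma \ref{FibersOfcoarselynTo1Functions} it splits into at most $n$ components of controlled diameter (at scale $\ge C(\diam V)$), so each $f^{-1}(V)$ can be replaced by its finitely many large-scale components, which are uniformly bounded.

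The main obstacle I anticipate is organizing the bookkeeping in the second inequality so that the disjointness scale and the boundedness estimate are simultaneously controlled. Pulling back a $d$-disjoint family of bounded sets gives a $d$-disjoint family by Lemma \ref{LemCoarseMapPullBackDisjointness}, but the individual preimages are only bounded after passing to components, and the component decomposition requires choosing a connectedness scale $R$ large enough (at least the control value on the diameter of the elements of $\mathcal{V}$) while being small enough not to destroy the $d$-disjointness already obtained. I would fix $R$ in terms of $d$ and the control $C$ first, then choose the cover $\mathcal{V}$ of $Y$ at the scale $E(R)$ rather than merely $E(d)$, so that the pulled-back components are $R$-disjoint and hence $d$-disjoint. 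Verifying that this single choice of scales makes all three estimates (disjointness, dimension, uniform boundedness) compatible is the delicate part; once it is set up, Observations \ref{DisUnionsObs} and \ref{DisUnionsObs} together with Lemmas \ref{LemmaMultToDisjointness} and \ref{FibersOfcoarselynTo1Functions} finish the argument routinely.
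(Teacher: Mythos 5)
Your overall strategy is the right one, and in fact it supplies content that the paper itself only gestures at: the paper's proof of the first inequality cites Proposition \ref{PropTransferCoverProperties} (3), but that proposition has only parts (1) and (2), both of which push covers \emph{forward} from $X$ to $Y$; the pullback argument you describe (Lemma \ref{LemCoarseMapPullBackDisjointness} for disjointness, Lemma \ref{FibersOfcoarselynTo1Functions} for boundedness of large-scale components of preimages) is exactly what the missing part would have to say. Your treatment of the second inequality coincides with the paper's, which simply invokes the Miyata--Virk result (in-paper: Lemma \ref{LemmaMiyataVirk}, together with the fact that $\asdim$ is a coarse invariant and $f(X)$ is coarsely dense in $Y$).

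There is, however, one step in your plan that would fail as stated: your resolution of the ``delicate part'' is circular. You propose to ``fix $R$ in terms of $d$ and the control $C$ first, then choose the cover $\mathcal{V}$ of $Y$ at the scale $E(R)$.'' But the constraint on $R$ coming from Lemma \ref{FibersOfcoarselynTo1Functions} is $R \ge C(r)$, where $r$ is the uniform bound on the diameters of the elements of $\mathcal{V}$ --- and $r$ is only known after $\mathcal{V}$ is chosen, and necessarily grows with the scale $E(R)$ at which you choose it (on an unbounded space, elements of a cover of controlled dimension at scale $S$ must have diameter comparable to $S$). If $C$ grows fast enough, the requirement $R \ge C(r)$ with $r$ growing in $R$ has no solution. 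The repair is to reverse the order of choices and to notice that you never need the pulled-back families to be $R$-disjoint, only $d$-disjoint. Concretely: if $\asdim Y = k$, write $Y$ as the union of $k+1$ uniformly $r$-bounded, $E(d)$-disjoint families $\mathcal{V}_0,\dots,\mathcal{V}_k$ (using the characterization of $\asdim$ by disjoint families, as the paper's proof suggests, which also lets you bypass the factor $k+1$ lost in Lemma \ref{LemmaMultToDisjointness}); by Lemma \ref{LemCoarseMapPullBackDisjointness} each family $\{f^{-1}(V) \mid V\in\mathcal{V}_i\}$ is $d$-disjoint. Only now set $R=\max(d,C(r))$ and replace each $f^{-1}(V)$ by its $R$-components: each has diameter at most $2nR$ by Lemma \ref{FibersOfcoarselynTo1Functions}; distinct $R$-components of the same preimage are more than $R\ge d$ apart by maximality; and components lying in different preimages from the same family are $d$-disjoint because they are subsets of $d$-disjoint sets. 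The resulting $k+1$ families are $d$-disjoint, uniformly bounded, and cover $X$, so $\asdim X\le k$ by Observation \ref{DisUnionsObs}.
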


\begin{proof}
The first inequality follows easily form Proposition \ref{PropTransferCoverProperties} (3) when using the definition of asymptotic dimension in terms of $n$-many $R$-disjoint uniformly bounded families covering the space. The second inequality is the main result of \cite{MV}.
\end{proof}

\begin{Theorem}
Suppose $f:X\to Y$ is coarsely $n$-to-1 and coarsely surjective. If $X$ has Asymptotic Property C, then $Y$ has Asymptotic Property C.
\end{Theorem}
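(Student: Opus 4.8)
The plan is to exploit the alternative description of asymptotic property C proved above: rather than producing $R_i$-disjoint families on $Y$ directly, I will produce finitely many uniformly bounded families whose dimension at the prescribed scales is bounded by a fixed integer, and then invoke that characterization with the constant sequence $m_i=n-1$. The engine is that the pushforward under a coarsely $n$-to-1 map turns a family that is disjoint at a sufficiently large scale into a family whose dimension at the target scale is at most $n-1$ (Lemma \ref{LemmaMiyataVirk}); crucially this bound does not grow with the scale, which is precisely what the alternative description is designed to accommodate. This is the key reason we need the reformulation and cannot argue with the original $0$-dimensional (disjoint) definition alone.

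Concretely, let $C$ be a coarsely-$n$-to-1 control for $f$ (assumed nondecreasing with $C(r)\to\infty$), let $E$ be a bornologous control, and fix $R>0$ with $Y\subset B(f(X),R)$ from coarse surjectivity. Given an arbitrary sequence $R_1<R_2<\cdots$ of scales on $Y$, choose a strictly increasing sequence $S_i$ of positive reals with $S_i\ge 2C(R_i+R)$. Since $X$ has asymptotic property C, there are an integer $N$ and uniformly (say $b$-)bounded families $\mathcal U_1,\dots,\mathcal U_N$ covering $X$ with each $\mathcal U_i$ being $S_i$-disjoint. Set $\mathcal V_i=f(\mathcal U_i)$, then $\mathcal W_i=\{B(V,R)\mid V\in\mathcal V_i\}$, and finally restrict each element to $Y$. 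The families $\mathcal W_i$ are $(E(b)+2R)$-bounded, and since the $\mathcal U_i$ cover $X$ the $\mathcal W_i$ cover $B(f(X),R)\supseteq Y$.

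It remains to bound $\dim_{R_i}(\mathcal W_i)$. Because $\mathcal U_i$ is $S_i$-disjoint with $S_i\ge 2C(R_i+R)$, no two $C(R_i+R)$-balls of distinct elements of $\mathcal U_i$ can meet, so $\dim_{C(R_i+R)}(\mathcal U_i)=0$; Lemma \ref{LemmaMiyataVirk} then gives $\dim_{R_i+R}(\mathcal V_i)\le n-1$. Enlarging by $R$ and then passing to $R_i$-balls only replaces each set by a subset of its $(R_i+R)$-neighborhood, so the nerve of $\mathcal W_i$ at scale $R_i$ embeds in that of $\mathcal V_i$ at scale $R_i+R$, whence $\dim_{R_i}(\mathcal W_i)\le \dim_{R_i+R}(\mathcal V_i)\le n-1$ (restricting to $Y$ only decreases this further). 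Thus the constant sequence $m_i=n-1$ witnesses, via the alternative description of asymptotic property C, that $Y$ has asymptotic property C. The one point to get right is the scale bookkeeping: the factor $2$ converting $S_i$-disjointness into vanishing $C(\cdot)$-dimension, and the extra $R$ absorbing coarse surjectivity, must be arranged so that the fixed bound $n-1$ survives both the pushforward and the $R$-enlargement. Once that is set up, the conclusion is immediate from the characterization theorem and no real obstacle remains.
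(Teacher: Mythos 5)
Your proof is correct, but it takes a genuinely different route from the paper's. The paper verifies the original (disjoint-families) definition of asymptotic property C directly: it assumes $f$ surjective without loss of generality, applies asymptotic property C of $X$ at the scales $C(n\cdot R_{i\cdot n})$, and then uses Proposition \ref{PropTransferCoverProperties}(2) to split each pushforward family $f(\mathcal U_i)$ back into $n$ uniformly bounded $R_{i\cdot n}$-disjoint families, so that after re-indexing, the $(n(i-1)+j)$-th family is $R_{n(i-1)+j}$-disjoint; the price is the index bookkeeping with the factor-$n$ jump in the scales. You instead route everything through the alternative characterization of asymptotic property C (the theorem following Lemma \ref{LemmaMultToDisjointness}) with the constant sequence $n_i=n-1$: an $S_i$-disjoint family with $S_i\ge 2C(R_i+R)$ has $C(R_i+R)$-dimension $0$, so Lemma \ref{LemmaMiyataVirk} pushes it forward to a family of $(R_i+R)$-dimension at most $n-1$, and the $R$-enlargement (which also absorbs coarse surjectivity, avoiding the WLOG-surjective reduction) keeps the bound at scale $R_i$. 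In effect you never re-disjointify on $Y$; the splitting that the paper performs explicitly is hidden inside the proof of the characterization theorem, where Lemma \ref{LemmaMultToDisjointness} does the same work once and for all. What each approach buys: the paper's argument is self-contained relative to the standard definition and produces explicit disjoint families on $Y$; yours has essentially no index gymnastics and is a genuine application of the characterization theorem, which the paper proves but never actually uses for this result. One shared technicality worth noting: Lemma \ref{LemmaMiyataVirk} (and Proposition \ref{PropTransferCoverProperties}) are stated for covers of $X$, while both you and the paper apply them to the individual families $\mathcal U_i$, which are not covers; this is harmless, since the lemma's proof never uses the covering hypothesis (alternatively, restrict $f$ to the union of the elements of $\mathcal U_i$), but strictly speaking it deserves a remark in either proof.
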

\begin{proof}
We may assume $f$ to be surjective as all properties in question are coarse invariants. Pick functions 
$C, E: [0,\infty)\to [0,\infty)$ such that $d_Y(f(x),f(y))\leq E(d_X(x,y))$ for all $x,y\in X$
and for each subset $B$ of $Y$ with $\diam(B) \leq r$, the number of $C(r)$-components of $f^{-1}(B)$ is at most $n$ and each $C(r)$-component has diameter at most $2n\cdot C(r)$. We may assume $E(r)\to \infty$ as $r\to\infty$.

Suppose $X$ has asymptotic property C and $R_1<R_2<\ldots$. There exists $m$ and there are uniformly bounded families $\mathcal U_i, i=1,\ldots,m$ such that the family $\mathcal U = \bigcup\limits_{i=1}^m \mathcal U_i$ is a cover of $X$ and each $\mathcal{U}_i$ is $C(n\cdot R_{i\cdot n})$-disjoint.  By Proposition \ref{PropTransferCoverProperties} we have $f(\mathcal U_i)=\bigcup\limits_{j=1}^n \mathcal V_{i,j}$ where $\mathcal V_{i,j}$ is an $R_{i\cdot n}$-disjoint and uniformly bounded family for $j\in \{1,\ldots,n\}$. We have obtained a collection of $m\cdot n$ uniformly bounded families $\mathcal V_{i,j}$ covering $f(X)$. Furthermore, $\mathcal V_{i,j}$ is $R_{n\cdot (i-1)+j}$-disjoint as it is $R_{i\cdot n}$-disjoint and  $R_{n\cdot(i-1)+j} \leq R_{in}$. This proves the theorem.
\end{proof}

\begin{Theorem}\label{ThmPropC}\label{AsdimZeroCaseForPropC}
Suppose $f:X\to Y$ is coarsely surjective of asymptotic dimension $\asdim(f)=0$. If $Y$ has Asymptotic Property C, then $X$ has Asymptotic Property C.
\end{Theorem}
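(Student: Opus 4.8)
The plan is to pull back an Asymptotic Property C decomposition of $Y$ along $f$ and show that the pullback, after passing to $R$-components, witnesses Asymptotic Property C for $X$. The key structural input is the definition $\asdim(f)=0$: it means that whenever $A\subset X$ has $\asdim(f(A))=0$, then $\asdim(A)=0$ as well, uniformly. More usefully, for our purposes the analogue of Lemma \ref{FibersOfcoarselynTo1Functions} should hold, namely there is a single control function $C$ such that for every bounded set $B\subset Y$ with $\diam(B)\le r$ and every scale $R\ge C(r)$, each $R$-component of $f^{-1}(B)$ has diameter bounded by a fixed function of $R$ (not depending on which $B$ we chose). This uniform $0$-dimensional control on preimages of bounded sets is exactly what $\asdim(f)=0$ provides and is the tool that converts bounded families in $Y$ into uniformly bounded families in $X$.

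First I would unwind the definitions: given a sequence $R_1<R_2<\cdots$ for which I must produce a finite $R_i$-disjoint uniformly bounded decomposition of $X$, I would choose an auxiliary sequence $S_j$ in $Y$, inflated by the control function $E$ of $f$ so that $E$-controlled distances in $X$ translate correctly, and feed $(S_j)$ into the Asymptotic Property C hypothesis for $Y$. This yields an integer $m$ and uniformly bounded families $\mathcal{V}_1,\ldots,\mathcal{V}_m$ covering $Y$ with each $\mathcal{V}_i$ being $S_i$-disjoint. I would then form the pullback families $f^{-1}(\mathcal{V}_i)=\{f^{-1}(V)\mid V\in\mathcal{V}_i\}$. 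By Lemma \ref{LemCoarseMapPullBackDisjointness}, each pullback family is $d$-disjoint for an appropriate $d$ computed from $S_i$ via $E$, so choosing the $S_i$ large enough guarantees the pulled-back families are $R_i$-disjoint.

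The remaining point, and the one requiring the most care, is uniform boundedness of the pullback: the sets $f^{-1}(V)$ need not be bounded even though each $V$ is. This is where $\asdim(f)=0$ does the real work. Since each $V\in\mathcal{V}_i$ has diameter at most some bound $b_i$, I would replace each $f^{-1}(V)$ by its collection of $R_i$-components. By the uniform $0$-dimensional control coming from $\asdim(f)=0$, these $R_i$-components all have diameter bounded by a single function of $b_i$ and $R_i$, hence are uniformly bounded. The family of all $R_i$-components across all $V\in\mathcal{V}_i$ remains $R_i$-disjoint (distinct $R_i$-components of possibly different $f^{-1}(V)$ in the same $R_i$-disjoint family stay $R_i$-apart, since the enclosing preimages were already $R_i$-disjoint). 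Taking the union of these refined families over $i=1,\ldots,m$ gives a cover of $X$, because the $\mathcal{V}_i$ covered $Y$ and $f$ is coarsely surjective, so that every point of $X$ lands in some $f^{-1}(V)$.

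The main obstacle I anticipate is making the passage from ``$\asdim(f)=0$'' to the clean uniform-component statement fully rigorous: the definition as stated quantifies over subsets $A$ with $\asdim(f(A))=0$, whereas I need a uniform control function on $R$-components of preimages of $r$-bounded sets. Establishing this equivalence—essentially that $\asdim(f)=0$ upgrades to a single control function governing all preimages simultaneously, as Lemma \ref{FibersOfcoarselynTo1Functions} did in the coarsely $n$-to-$1$ case—is the technical heart, and I would isolate it as a preliminary observation before running the pullback argument. Once that uniform control is in hand, the disjointness and boundedness bookkeeping, together with the reindexing of the finitely many refined families, is routine and parallels the previous theorem's proof.
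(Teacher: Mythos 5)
Your proposal follows essentially the same route as the paper's proof: inflate the scales by the control function $E$ of $f$, apply Asymptotic Property C in $Y$ to get uniformly bounded $E(R_i)$-disjoint families, pull them back (Lemma \ref{LemCoarseMapPullBackDisjointness} giving $R_i$-disjointness), and use $\asdim(f)=0$ to replace each preimage by its uniformly bounded components. The uniformity issue you single out as the technical heart is in fact passed over silently in the paper, which simply asserts a single bound $M$ for all components of preimages of elements of the uniformly bounded families; your version (with the sparse-subsequence argument, of the kind the paper itself uses in its MSP proposition, to justify the uniform control) is the same argument stated more carefully.
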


\begin{proof}
Pick a control function $E:[0,\infty)\to [0,\infty)$ of $f$.
Suppose $Y$ has asymptotic property C. Choose $R_1<R_2<\ldots$. There exist $m$ and uniformly bounded families $\mathcal U_i, i=1,\ldots,m$ such that the family $\mathcal U = \bigcup\limits_{i=1}^m \mathcal U_i$ is a cover of $Y$ and each $\mathcal{U}_i$ is $E(R_i)$-disjoint. Therefore each $f^{-1}(\mathcal{U}_i)$ is $R_i$-disjoint.
Since $\asdim(f)=0$, there is $M > 0$ such that each $R_m$-component of $f^{-1}(U)$, $U\in \mathcal{U}_i$ is of diameter at most $M$. If we define $\mathcal{V}_i$ as consisting of $R_m$-components of $f^{-1}(U)$, $U\in \mathcal{U}_i$, then each $\mathcal{V}_i$ is uniformly bounded, $R_i$-disjoint, and $X=\bigcup\limits_{i=1}^m \mathcal{V}_i$. That concludes the proof of $X$ having asymptotic property C.
\end{proof}

\begin{Question}
Suppose $f:X\to Y$ is coarsely surjective of finite asymptotic dimension $\asdim(f)$. If $Y$ has Asymptotic Property C, does $X$ have Asymptotic Property C?
\end{Question}

\begin{Corollary}
If $f:X\to Y$ is coarsely $n$-to-1 and coarsely surjective, then $X$ has Asymptotic Property C if and only if $Y$ has Asymptotic Property C.
\end{Corollary}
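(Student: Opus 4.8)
The plan is to obtain both implications directly from results already established, since the equivalence is precisely the conjunction of the two preceding preservation theorems once one observes that a coarsely $n$-to-1 map has asymptotic dimension $0$.

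First I would dispose of the forward implication. If $X$ has Asymptotic Property C, then the theorem immediately preceding Theorem \ref{ThmPropC} (which assumes only that $f$ is coarsely $n$-to-1 and coarsely surjective) gives at once that $Y$ has Asymptotic Property C. No further argument is needed here, as this is exactly the content of that theorem.

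For the reverse implication I would first invoke the earlier corollary asserting that every coarsely $n$-to-1 function satisfies $\asdim(f)=0$. Together with the standing hypothesis that $f$ is coarsely surjective, this places $f$ exactly in the hypotheses of Theorem \ref{ThmPropC}. Applying that theorem, if $Y$ has Asymptotic Property C then so does $X$. Combining the two implications yields the claimed equivalence.

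The argument presents no genuine obstacle, since both halves are quotations of prior results; the only point worth verifying is that the hypotheses match up, namely that being coarsely $n$-to-1 delivers the condition $\asdim(f)=0$ required by Theorem \ref{ThmPropC}. As noted, this is exactly the content of the earlier corollary, so the coarse invariance of Asymptotic Property C under coarsely $n$-to-1, coarsely surjective maps follows formally.
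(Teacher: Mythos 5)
Your proof is correct and is exactly the argument the paper intends: the corollary is stated without proof precisely because it is the conjunction of the preceding preservation theorem (coarsely $n$-to-1 and coarsely surjective maps push Asymptotic Property C forward) with Theorem \ref{ThmPropC} applied via the earlier corollary that coarsely $n$-to-1 implies $\asdim(f)=0$. Your check that the hypotheses match up is the only content needed, and it is handled correctly.
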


\section{Metric Sparsification Property }
\begin{Definition}(see \cite{CTWY})\label{MSPDef}
A metric space $X$ has MSP (\textbf{Metric Sparsification Property}) if for all $R > 0$ and for each positive $c < 1$ there exists $S > 0$ such that for all probability measures $\mu$ on $X$ there exists an $R$-disjoint family $\{\Omega_i\}_{i\ge 1}$ of subsets of $X$ of diameter at most $S$ satisfying
$$\sum\limits_{i=1}^\infty \mu(\Omega_i) > c.$$
\end{Definition}

\begin{Remark}
As noted in \cite{BNSWW} a metric space $X$ has MSP if and only if there is $c > 0$ such that for all $R > 0$ there exists $S > 0$ with the property that for all probability measures $\mu$ on $X$ there exists an $R$-disjoint family $\{\Omega_i\}_{i\ge 1}$ of subsets of $X$ of diameter at most $S$ satisfying
$$\sum\limits_{i=1}^\infty \mu(\Omega_i) > c.$$
\end{Remark}

\begin{Proposition}
If $\asdim(X)$ is finite, then $X$ has MSP.
\end{Proposition}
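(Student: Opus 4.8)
The plan is to reduce MSP as stated in Definition \ref{MSPDef} (which quantifies over all $c<1$) to the apparently weaker single-constant formulation recorded in the Remark above: it suffices to exhibit one fixed constant $c>0$ such that for every $R>0$ there is an $S>0$ witnessing the MSP conclusion for all probability measures $\mu$. So set $n=\asdim(X)<\infty$ and take the fixed constant to be $c=\frac{1}{n+2}$.

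Given $R>0$, I would first manufacture an economical decomposition of $X$ at scale $R$. Since $\asdim(X)=n$, the scale-$R$ characterization of asymptotic dimension (the Observation after Definition \ref{AsDimDef}), applied at scale $(n+1)R$, yields an $M$-bounded cover $\mathcal U$ of $X$ with $\dim_{(n+1)R}(\mathcal U)\le n$ for some $M$. Feeding $\mathcal U$ into Lemma \ref{LemmaMultToDisjointness}, with the lemma's scale parameter set to $(n+1)R$, splits it as $\mathcal V=\bigcup_{i=0}^{n}\mathcal V^i$, where each $\mathcal V^i$ is $R$-disjoint and, by part (2), every element of $\mathcal V$ has diameter at most $S:=M+2(n+1)R$. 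This $S$ depends only on $R$ and $n$, never on the measure.

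Now fix an arbitrary probability measure $\mu$ and write $A_i=\bigcup_{V\in\mathcal V^i}V$. The families cover $X$, so $\sum_{i=0}^{n}\mu(A_i)\ge\mu(X)=1$, and by pigeonhole some index $i$ satisfies $\mu(A_i)\ge\frac{1}{n+1}>\frac{1}{n+2}=c$. Because $R>0$, distinct elements of $\mathcal V^i$ are disjoint, so the corresponding $\mathcal V^i=\{\Omega_j\}$ is an $R$-disjoint family of sets of diameter at most $S$ with $\sum_j\mu(\Omega_j)=\mu(A_i)>c$. This is precisely the family demanded by the single-constant version of MSP, completing the reduction.

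The one genuinely essential ingredient is the reduction supplied by the Remark. A single decomposition into $n+1$ disjoint families can only guarantee one block of measure $\ge\frac{1}{n+1}$, which falls short of the ``$c$ arbitrarily close to $1$'' requirement in Definition \ref{MSPDef}; after invoking the Remark, everything reduces to the trivial pigeonhole above. Thus the main obstacle is conceptual rather than computational: were the Remark unavailable, one would have to boost the captured measure toward $1$, presumably by iterating the decomposition on the complement $X\setminus A_i$ while keeping disjointness under control across stages.
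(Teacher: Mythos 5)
Your proof is correct and takes essentially the same route as the paper: decompose a uniformly bounded cover at scale $(n+1)R$ via Lemma \ref{LemmaMultToDisjointness} into $n+1$ many $R$-disjoint uniformly bounded families, then pigeonhole the measure to capture at least $\frac{1}{n+1}$ of it. You are in fact more explicit than the paper's two-line proof, which invokes the Remark's fixed-constant reformulation of MSP only implicitly.
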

\begin{proof}
Apply \ref{LemmaMultToDisjointness} to detect, for each $R > 0$, a uniformly bounded cover that decomposes into a union of $(n+1)$ families, each $R$-disjoint.
Therefore, given a probability measure $\mu$ on $X$, one of those families adds up to a subset $\Omega$ of $X$ whose measure is at least $\frac{1}{n+1}$.
\end{proof}

\begin{Theorem}\label{ThmMSP}
If $f:X\to Y$ is coarsely $n$-to-1, $f$ is coarsely surjective, and $X$ has MSP, then $Y$ has MSP.
\end{Theorem}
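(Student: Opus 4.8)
The plan is to transport a probability measure from $Y$ back to $X$, apply the MSP of $X$ there, and then push the resulting sparse family forward through $f$, using the coarsely $n$-to-$1$ hypothesis to restore disjointness in $Y$. Fix a coarsely $n$-to-$1$ control $D$ and a bornologous control $E$ of $f$. Since $f$ is coarsely surjective, $f(X)$ is coarsely dense in $Y$, so the inclusion $f(X)\hookrightarrow Y$ is a coarse equivalence; as MSP is a coarse invariant we may replace $Y$ by $f(X)$ and thereby assume $f$ is \emph{surjective} (its fibers, hence its controls, are unchanged). I will verify the fixed-fraction form of MSP from the Remark above: I produce a positive constant $c'$ such that for every $R>0$ there is an $S>0$ for which every probability measure on $Y$ admits an $R$-disjoint, $S$-bounded family of total measure exceeding $c'$.

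So fix $R>0$ and a probability measure $\nu$ on $Y$, and set $r=nR$ and $R'=2D(r)+1$. First I lift $\nu$ to a probability measure $\mu$ on $X$ with $f_*\mu=\nu$; such a lift exists because $f$ is surjective (take $\mu=s_*\nu$ for a section $s$ of $f$, with measurability arranged by a measurable-selection argument, or by first reducing to finitely supported $\nu$). Applying the MSP of $X$ with disjointness parameter $R'$ and fraction $c=\tfrac12$ yields an $S_0>0$ \emph{independent of $\nu$} together with an $R'$-disjoint family $\{\Omega_i\}$ in $X$ satisfying $\diam(\Omega_i)\le S_0$ and $\sum_i\mu(\Omega_i)>\tfrac12$. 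This is the step that forces the whole construction, since the MSP of $X$ must be invoked for the artificially built measure $\mu$, which is exactly why the definition quantifies over \emph{all} measures.

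Now push forward. Put $A=\bigcup_i f(\Omega_i)\subseteq Y$. Because the $\Omega_i$ are disjoint and $\Omega_i\subseteq f^{-1}(f(\Omega_i))\subseteq f^{-1}(A)$, we get $\nu(A)=\mu(f^{-1}(A))\ge\sum_i\mu(\Omega_i)>\tfrac12$. Since $\{\Omega_i\}$ is $R'$-disjoint with $R'>2D(r)$, its $D(r)$-balls are pairwise disjoint, so $\dim_{D(r)}(\{\Omega_i\})=0$; Lemma \ref{LemmaMiyataVirk} (whose proof merely counts intersections and so applies to the family $\{\Omega_i\}$ even though it need not cover $X$) then gives $\dim_r(\{f(\Omega_i)\})\le(0+1)n-1=n-1$, while each $f(\Omega_i)$ has diameter at most $E(S_0)$. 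Viewing $\{f(\Omega_i)\}$ as a cover of the subspace $A$ of $r$-dimension at most $n-1$ and applying Lemma \ref{LemmaMultToDisjointness} inside $A$ (with the lemma's scale and dimension taken to be $r$ and $n-1$) produces a cover $\bigcup_{j=0}^{n-1}\mathcal V^j$ of $A$ in which each $\mathcal V^j$ is $\tfrac{r}{n}$-disjoint and every element has diameter at most $E(S_0)+2r$.

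Finally I count measure. As $\bigcup_j\mathcal V^j$ covers $A$, we have $\sum_{j=0}^{n-1}\sum_{V\in\mathcal V^j}\nu(V)\ge\nu(A)>\tfrac12$, so by pigeonhole some index $j_0$ satisfies $\sum_{V\in\mathcal V^{j_0}}\nu(V)>\tfrac{1}{2n}$. The family $\mathcal V^{j_0}$ is $\tfrac{r}{n}$-disjoint, that is $R$-disjoint, and $(E(S_0)+2r)$-bounded, with $S:=E(S_0)+2nR$ depending only on $R$. Hence $Y$ satisfies the fixed-fraction MSP with constant $c'=\tfrac{1}{2n}$ and therefore has MSP. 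The principal obstacle is producing the lift $\mu$ with $f_*\mu=\nu$ as a genuine measure, i.e. the measurability of the section (or a clean reduction to finitely supported measures); everything downstream is bookkeeping. A harmless edge case is $n=1$: there $\{f(\Omega_i)\}$ already has disjoint $r$-balls, hence is itself $r$-disjoint, and the whole family of measure $>\tfrac12$ can be used directly without invoking the splitting lemma.
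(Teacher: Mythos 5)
Your proof is correct and follows essentially the same route as the paper's: lift the measure to $X$ via a section, apply MSP of $X$ at a scale comparable to $D(nR)$, push the resulting family forward and split it into $n$ many $R$-disjoint uniformly bounded families via Lemma \ref{LemmaMiyataVirk} together with Lemma \ref{LemmaMultToDisjointness} (which is exactly Proposition \ref{PropTransferCoverProperties}(2)), and pigeonhole to extract one family of measure $>\tfrac{1}{2n}$. If anything, your choice $R'=2D(nR)+1$ is slightly more careful than the paper's, which requests only a $D(nR)$-disjoint family yet then treats its $D(nR)$-balls as disjoint --- a harmless factor-of-two the paper glosses over and you patch explicitly.
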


\begin{proof}
We may assume $f$ to be surjective as all properties in question are coarse invariants. Suppose $f$ is coarsely $n$-to-$1$ with control $D$, and bornologous with control $E$. We have to find, for each $R$, an $S > 0$ such that for any probability measure $\mu$ on $Y$ there is an $R$-disjoint family $\Omega_j$ in $Y$ so that
$$\mu(\bigcup\limits_{j=1}^\infty \Omega_j) > 1/(2n)$$
and diameter of each $\Omega_j$ is at most $S$.
As $X$ has MSP there exists $B > 0$ so that for every probability measure $\lambda$ on $X$ there is a $D(n R)$-disjoint family $\Omega'_i$ in $X$ so that $$\lambda(\bigcup\limits_{i=1}^\infty \Omega'_i) > 1/2$$
and diameter of each $\Omega'_i$ is at most $B$. We will prove that $S=E(B)+n R$ suffices.

\textbf{Step 1:} Transferring a measure to $X$. Suppose $\mu$ is a probability measure on $Y$. For each $y\in Y$ choose (by surjectivity) $x_y \in f^{-1}(\{y\})$. Define a probability measure $\lambda$ on $X$ by
    $$
    \lambda (A)=\mu (\{y\in Y \mid x_y \in A\}), \quad \emph{ for every } A\subset X.
    $$
Note that $\lambda (A)=\mu(f(A)), \forall A \subset X$.
Choose a $D(n R)$-disjoint family $\Omega'_i$ in $X$ so that $$\lambda(\bigcup\limits_{i=1}^\infty \Omega'_i) > 1/2$$
and diameter of each $\Omega'_i$ is at most $B$. In particular, the $D(n R)$-dimension of the collection $\{\Omega'_i\}$ is at most $1$.

\textbf{Step 2:} Transferring a cover to $Y$. By Proposition \ref{PropTransferCoverProperties} (2) (for $\mu=1$) there exists an $(E(b)+n R)$-bounded cover ${\mathcal{V}}=\bigcup\limits_{i=1}^{n}{\mathcal{V}}^i$ of
$f(\bigcup\limits_{i=1}^\infty \Omega'_i)$ so that each $\mathcal V ^i$ is an $(n\cdot r)/n$-disjoint family. As $\mu(f(\bigcup\limits_{i=1}^\infty \Omega'_i))=1/2$ there exists $i_0$ so that $\mu(\mathcal V ^{i_0})\geq 1/(2n)$. This completes the proof as  $\{\Omega_j\}:=\mathcal V ^{i_0}$ works.
\end{proof}

\begin{Corollary}
Suppose $X$ and $Y$ are of bounded geometry. If $f:X\to Y$ is coarsely $n$-to-1, $f$ is coarsely surjective, and $X$ has Property A, then $Y$ has Property A.
\end{Corollary}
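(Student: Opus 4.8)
The plan is to reduce the statement to Theorem \ref{ThmMSP} by invoking the known equivalence, for spaces of bounded geometry, between Property A and the Metric Sparsification Property. Recall that Chen, Tessera, Wang and Yu \cite{CTWY} established that a metric space of bounded geometry has Property A if and only if it has MSP. This equivalence is precisely the bridge that lets us transport Property A across $f$ using the measure-theoretic machinery already developed in the proof of Theorem \ref{ThmMSP}.

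First I would observe that since $X$ has Property A and $X$ is of bounded geometry, the cited equivalence gives that $X$ has MSP. At this point all the hypotheses of Theorem \ref{ThmMSP} are met: the map $f:X\to Y$ is coarsely $n$-to-1, it is coarsely surjective, and $X$ has MSP. Applying that theorem directly yields that $Y$ has MSP.

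Finally, I would apply the equivalence in the reverse direction on the target space: since $Y$ is of bounded geometry and has MSP, it has Property A. This completes the argument, so the corollary is a formal chaining of Theorem \ref{ThmMSP} together with two applications of the bounded-geometry equivalence, one on each end of $f$.

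The only genuinely substantive ingredient is the appeal to the equivalence between Property A and MSP for spaces of bounded geometry; everything else is routine. The main obstacle is therefore not internal to this proof but consists in correctly citing and applying that external equivalence, and in particular in noting that the bounded geometry hypothesis is imposed on \emph{both} $X$ and $Y$ precisely so that the equivalence can be invoked at each step: once to convert Property A on $X$ into MSP, and once to convert MSP on $Y$ back into Property A. Once that equivalence is in hand, the conclusion follows immediately from Theorem \ref{ThmMSP}.
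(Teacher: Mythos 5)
Your proof is correct and takes essentially the same route as the paper: convert Property A on $X$ to MSP, apply Theorem \ref{ThmMSP} to transfer MSP to $Y$, and convert back using bounded geometry of $Y$. One caveat on attribution: the full equivalence of Property A and MSP for spaces of bounded geometry is due to Brodzki, Niblo, \v Spakula, Willett, and Wright \cite{BNSWW} (which is what the paper cites), whereas \cite{CTWY} established only the implication from Property A to MSP; your argument needs the converse direction as well, so the citation should be \cite{BNSWW}.
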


\begin{proof}
It was proved in \cite{BNSWW} that Property A is equivalent to MSP for spaces of bounded geometry. 
\end{proof}

\begin{Definition}
A bornologous function $f:X\to Y$ of metric spaces has \textbf{MSP} if $f^{-1}(A)$ has MSP for every subset $A$ of $Y$ of asymptotic dimension $0$.
\end{Definition}

\begin{Corollary}
If $\asdim(f)$ is finite or $f$ is coarsely $n$-to-$1$, then $f$ has MSP.
\end{Corollary}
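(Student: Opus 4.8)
The plan is to observe that the coarsely $n$-to-$1$ case is subsumed by the finite-dimensional case, and then to show directly that $\asdim(f) < \infty$ forces $f$ to have MSP. Indeed, the Corollary established above tells us that a coarsely $n$-to-$1$ map automatically satisfies $\asdim(f) = 0$, which is in particular finite; so it suffices to treat the single hypothesis that $\asdim(f)$ is finite, and the two cases of the statement collapse to one.

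So assume $\asdim(f)$ is finite. To verify that $f$ has MSP I must check that $f^{-1}(A)$ has MSP for an arbitrary subset $A$ of $Y$ with $\asdim(A) = 0$. I would set $B := f^{-1}(A)$, equipped with the subspace metric inherited from $X$. The key observation is that $f(B) = f(f^{-1}(A)) \subseteq A$, and since the asymptotic dimension of a subspace does not exceed that of the ambient space, $\asdim(f(B)) \leq \asdim(A) = 0$, so $\asdim(f(B)) = 0$ (the case $f(B) = \emptyset$ being trivial). Therefore $B$ is one of the sets appearing in the supremum defining $\asdim(f)$, and the definition of $\asdim(f)$ directly yields $\asdim(B) \leq \asdim(f) < \infty$.

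The final step is to invoke the Proposition proved earlier to the effect that a metric space of finite asymptotic dimension has MSP. Applying that Proposition to the metric space $B = f^{-1}(A)$, I conclude that $f^{-1}(A)$ has MSP. Since $A$ was an arbitrary subset of $Y$ of asymptotic dimension $0$, this is precisely the assertion that $f$ has MSP.

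I expect no serious obstacle here: the entire argument is a bookkeeping exercise that chains together the definition of $\asdim(f)$, the monotonicity of asymptotic dimension under passage to subspaces, and the already-established implication that finite asymptotic dimension gives MSP. The one point worth stating carefully is the reduction of the coarsely $n$-to-$1$ hypothesis to the finite-dimension hypothesis via the earlier Corollary that such maps have $\asdim(f) = 0$; with that in hand, no separate measure-theoretic construction (of the kind carried out in Theorem \ref{ThmMSP}) is required.
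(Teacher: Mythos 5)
Your proof is correct, and it is precisely the argument the paper leaves implicit: the Corollary is stated without proof, since it follows immediately from chaining the earlier Corollary ($f$ coarsely $n$-to-$1$ implies $\asdim(f)=0$), the definition of $\asdim(f)$ together with monotonicity of $\asdim$ under subspaces applied to $B=f^{-1}(A)$, and the Proposition that finite asymptotic dimension implies MSP. Your reduction of the coarsely $n$-to-$1$ case to the finite-dimensional case and the verification that $f^{-1}(A)$ falls under the supremum defining $\asdim(f)$ are exactly the intended bookkeeping.
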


\begin{Proposition}
$f:X\to Y$ has MSP if and only if for every $c, R, K > 0, c<1$ there is $S > 0$ such that for every probability measure $\mu$ on $X$ such that the diameter of $f(supp(\mu))$ is less than $K$
there is a subset $\Omega$ of $X$ whose $R$-components are $S$-bounded and $\mu(\Omega) > c$.
\end{Proposition}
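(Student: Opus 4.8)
The plan is to prove both implications by reducing each to the metric sparsification property of $f^{-1}(A)$ for suitable subsets $A\subset Y$ of asymptotic dimension $0$. Throughout I will use the reformulation that a metric space $Z$ has MSP if and only if for all $R>0$ and $c<1$ there is $S>0$ so that every probability measure $\mu$ on $Z$ admits a single set $\Omega\subset Z$ whose $R$-components are $S$-bounded and with $\mu(\Omega)>c$; this is immediate from Definition \ref{MSPDef} since the $R$-components of such an $\Omega$ form an $R$-disjoint family of $S$-bounded sets, and conversely the union of an $R$-disjoint family has its $R$-components contained in single members. I also fix a control $E$ for the bornologous map $f$, so that $d(f(x),f(y))\le E(d(x,y))$.

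For the implication from the local condition to $f$ having MSP, fix a subset $A\subset Y$ with $\asdim(A)=0$ and target parameters $R,c$; put $c'=(1+c)/2$. First I would use $\asdim(A)=0$ at scale $E(R)$ to obtain a uniformly bounded, say $K'$-bounded, $E(R)$-disjoint cover $\{W_k\}$ of $A$. By Lemma \ref{LemCoarseMapPullBackDisjointness} the family $\{f^{-1}(W_k)\}$ is then $R$-disjoint and partitions $f^{-1}(A)$. Applying the local condition with parameters $(c',R,K')$ yields $S$, and I claim this $S$ witnesses MSP of $f^{-1}(A)$: given a probability measure $\mu$ on $f^{-1}(A)$, I would split it along the partition, normalize the nonzero pieces to probability measures $\bar\mu_k$ supported in $f^{-1}(W_k)$ (whose $f$-image has diameter at most $K'$), apply the local condition to each to get $\Omega_k\subset f^{-1}(W_k)$ with $R$-components $S$-bounded and $\bar\mu_k(\Omega_k)>c'$, and set $\Omega=\bigcup_k\Omega_k$. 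Because the pieces $f^{-1}(W_k)$ are $R$-disjoint, every $R$-component of $\Omega$ sits inside a single $\Omega_k$ and is $S$-bounded, while weighting by the masses $p_k=\mu(f^{-1}(W_k))$ gives $\mu(\Omega)\ge c'\sum_k p_k=c'>c$. Note that $S$ is allowed to depend on $A$ here, so no uniformity is needed in this direction.

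For the converse, assume $f$ has MSP and fix $c,R,K$ with $c<1$; the goal is a single $S$ good for all measures whose $f$-support has diameter $<K$. I would argue by contradiction: if no $S$ works, choose for each $n$ a probability measure $\mu_n$ with $\diam(f(\mathrm{supp}\,\mu_n))<K$ admitting no $\Omega$ with $R$-components $n$-bounded and $\mu_n(\Omega)>c$, and pick $y_n\in f(\mathrm{supp}\,\mu_n)$. The main obstacle is that ``$f$ has MSP'' only supplies, for each individual asymptotic-dimension-$0$ set $A$, an $S$ that may depend on $A$; to reach a contradiction I must manufacture one asymptotic-dimension-$0$ set $A\subset Y$ that simultaneously controls infinitely many of the bad measures. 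If $\{y_n\}$ lies in a bounded set, then every $f(\mathrm{supp}\,\mu_n)$ lies in a common bounded (hence $\asdim 0$) set $A$. If $\{y_n\}$ is unbounded, I would inductively extract a subsequence $y_{n_k}$ that is increasingly separated, each new point chosen at distance $>k+2K$ from all previous ones, which is possible since unboundedness forces infinitely many far points; then for every scale $r$ all but finitely many of the sets $A_{n_k}:=f(\mathrm{supp}\,\mu_{n_k})$ are pairwise $r$-separated, and lumping the finitely many remaining ones into a single bounded cluster shows, via Observation \ref{DisUnionsObs}, that $A:=\bigcup_k A_{n_k}$ has $\asdim 0$.

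Finally I would invoke MSP of $f^{-1}(A)$ to obtain a single $S^*$ valid for every probability measure on $f^{-1}(A)$, in particular for each $\mu_{n_k}$ (which is supported in $f^{-1}(A_{n_k})\subset f^{-1}(A)$); this yields $\Omega_k^*$ with $R$-components $S^*$-bounded and $\mu_{n_k}(\Omega_k^*)>c$. Choosing any $k$ with $n_k\ge S^*$, the $R$-components of $\Omega_k^*$ are then $n_k$-bounded, directly contradicting the defining property of $\mu_{n_k}$. The delicate point throughout the converse is precisely the separation construction guaranteeing that the union of the bad supports remains asymptotic-dimension-$0$; everything else is the routine bookkeeping of restricting and recombining measures along $R$-disjoint families.
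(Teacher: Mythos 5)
Your proposal is correct, and for the substantive implication it takes the same route as the paper: assuming $f$ has MSP, negate the uniform condition to obtain bad measures $\mu_n$, pass to a subsequence whose image supports $f(\mathrm{supp}\,\mu_n)$ are increasingly separated so that their union $A$ has asymptotic dimension $0$, and then play the uniform $S$ coming from MSP of $f^{-1}(A)$ against a measure $\mu_{n_k}$ with $n_k>S$. Your write-up of this direction is in fact more careful than the paper's: the paper compresses the case analysis into the assertion that $\bigcup_n f(\mathrm{supp}\,\mu_n)$ ``cannot be a bounded set'' (which really means: if it were bounded it would already have asymptotic dimension $0$ and the same contradiction applies), whereas you treat the bounded case explicitly and spell out the separation construction together with the appeal to Observation \ref{DisUnionsObs}. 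The genuine difference in coverage is that you also prove the other implication --- that the uniform condition implies $f$ has MSP --- by taking a uniformly bounded $E(R)$-disjoint cover $\{W_k\}$ of an $\asdim$-$0$ set $A$, pulling it back to an $R$-disjoint partition of $f^{-1}(A)$ via Lemma \ref{LemCoarseMapPullBackDisjointness}, normalizing the measure on each piece, and reassembling the resulting sets; the paper's proof addresses only the contradiction direction and is silent on this one, so your argument fills a real omission. One small technical point in that direction: after normalizing $\mu$ on $f^{-1}(W_k)$, the support of $\bar\mu_k$ lies only in the closure of $f^{-1}(W_k)$, so $\diam f(\mathrm{supp}\,\bar\mu_k)$ may exceed the bound $K'$ on $\diam W_k$ by an amount controlled by $E$ near $0$; taking $K=K'+2E(1)+1$ when you invoke the uniform condition repairs this, and it is a level of pedantry the paper itself does not attempt.
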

\begin{proof}
Suppose there is $c, R, K > 0$ ($c < 1$) such that for every $n > 1$ there is a probability measure $\mu_n$ on $X$ with diameter of $f(supp(\mu_n))$ less than $K$ such that for any subset $\Omega$ of $X$ satisfying $\mu_n(\Omega) > c$ there is an $R$-component of $\Omega$ of diameter bigger than $n$.

$\bigcup\limits_{n > 1} f(supp(\mu_n))$ cannot be a bounded set and, by picking a subsequence of measures, we may achieve $A=\bigcup\limits_{n > 1} f(supp(\mu_n))$ being of asymptotic dimension $0$. Since $f^{-1}(A)$ has MSP, there is $S > 0$
such that for any measure $\mu$ on $X$ there is a subset $\Omega$ of $X$
whose $R$-components have diameter at most $n$ and $\mu(\Omega) > c$.
Pick $n > S$ and consider $\mu=\mu_n$. The set $\Omega$ picked for that measure
has an $R$-component of diameter bigger than $n$, a contradiction.
\end{proof}

\begin{Theorem}\label{ThmMSPReverse}
If $f:X\to Y$ has MSP and $Y$ has MSP, then $X$ has MSP.
\end{Theorem}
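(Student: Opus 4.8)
The plan is to prove that $X$ has MSP by a ``horizontal then vertical'' decomposition, using the hypothesis on $Y$ to control behaviour in the base and the hypothesis on $f$ to control behaviour along the fibers. The key technical device is the local characterization of $f$ having MSP established in the Proposition preceding this theorem: for a measure whose support maps into a set of bounded diameter in $Y$, it suffices to produce a single subset $\Omega\subset X$ whose $R$-components are $S$-bounded and which carries most of the measure. Recall also that such a set $\Omega$ is interchangeable with an $R$-disjoint family of $S$-bounded sets, since the distinct $R$-components of $\Omega$ are pairwise at distance exceeding $R$ and their union is $\Omega$. Thus it is enough to produce, for each $R>0$ and each $c<1$, a uniform $S>0$ so that every probability measure $\mu$ on $X$ admits a set $\Omega\subset X$ with $S$-bounded $R$-components and $\mu(\Omega)>c$.

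Fix $R>0$ and $c<1$, and choose $c_1,c_2<1$ with $c_1c_2>c$ (for instance $c_1=c_2=(1+c)/2$). Let $E$ be a control for $f$, so that $d_Y(f(x),f(x'))\le E(d_X(x,x'))$. First I would invoke the MSP of $Y$ (Definition \ref{MSPDef}) at scale $E(2R)$ and constant $c_1$ to obtain $S_Y>0$; then I would invoke the local MSP of $f$ with constant $c_2$, scale $R$, and horizontal bound $K=S_Y+1$ to obtain $S_f>0$, and set $S=S_f$. Both $S_Y$ and $S_f$ are chosen before $\mu$, which secures the required uniformity. Now, given a probability measure $\mu$ on $X$, push it forward to $\nu=f_*\mu$ on $Y$ (so $\nu(B)=\mu(f^{-1}(B))$) and apply the MSP of $Y$ to obtain an $E(2R)$-disjoint family $\{V_k\}$ of $S_Y$-bounded subsets of $Y$ with $\sum_k\nu(V_k)>c_1$. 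By Lemma \ref{LemCoarseMapPullBackDisjointness} the family $\{f^{-1}(V_k)\}$ is then $2R$-disjoint in $X$. For each $k$ with $\nu(V_k)>0$, the normalized restriction of $\mu$ to $f^{-1}(V_k)$ has support mapping into $V_k$, hence into a set of diameter at most $S_Y<K$, so the local MSP of $f$ yields a set whose $R$-components are $S_f$-bounded and whose relative measure exceeds $c_2$; after intersecting it with $f^{-1}(V_k)$ (which preserves both properties) I obtain $\Omega^{(k)}\subset f^{-1}(V_k)$ with these features. Finally set $\Omega=\bigcup_k\Omega^{(k)}$.

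Two estimates finish the argument. For the measure, the disjointness of the sets $f^{-1}(V_k)$ gives $\mu(\Omega)=\sum_k\mu(\Omega^{(k)})>c_2\sum_k\nu(V_k)>c_1c_2>c$. For the geometry, since the pieces $f^{-1}(V_k)$ are $2R$-disjoint, no $R$-chain in $\Omega$ can pass from one piece to another; hence every $R$-component of $\Omega$ lies inside a single $\Omega^{(k)}$ and is therefore $S$-bounded. Passing to the $R$-components of $\Omega$ produces the desired $R$-disjoint family of $S$-bounded sets carrying $\mu$-measure greater than $c$, which is exactly what MSP for $X$ requires.

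The main obstacle, and the place where the two hypotheses must be married at the correct scale, is the geometric estimate: the fiberwise sets $\Omega^{(k)}$ individually have bounded $R$-components, but one must prevent their $R$-components from amalgamating across different fibers. This is precisely why the base family $\{V_k\}$ is taken at scale $E(2R)$ rather than $E(R)$: the pullback Lemma \ref{LemCoarseMapPullBackDisjointness} then forces the preimages to be $2R$-disjoint, so that a single $R$-step (of length at most $R<2R$) can never bridge two distinct pieces. Everything else is bookkeeping with the two constants $c_1,c_2$ and the transfer of the measure between $X$ and $Y$.
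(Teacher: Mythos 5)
Your proof is correct and follows essentially the same route as the paper's: push $\mu$ forward to $Y$, use the MSP of $Y$ to capture most of the mass in coarsely separated bounded pieces, pull back, apply the local MSP characterization of $f$ on each piece (whose image has bounded diameter), and take the union, checking the measure and separation estimates. The only differences are bookkeeping: you work at scale $E(2R)$ and invoke Lemma \ref{LemCoarseMapPullBackDisjointness} where the paper separates preimages of distinct $R_Y$-components directly, and you track constants $c_1c_2>c$ to get an arbitrary $c<1$, where the paper fixes $0.5\cdot 0.5=0.25$ and relies on the remark that a single positive constant suffices for MSP.
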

\begin{proof}
Suppose $R_X > 0$. Pick $R_Y > 0$ such that $d_X(x,y) \leq R_X$ implies $d_Y(f(x),f(y))\leq R_Y$. Choose $K > 0$ such that for any probability measure $\mu$ on $Y$ there is a subset $\Omega$ of measure bigger than $0.5$ whose $R_Y$-components are $K$-bounded. Pick $S > 0$ with the property that for every probability measure $\mu$ on $X$ such that the diameter of $f(supp(\mu))$ is less than $K$
there is a subset $\Omega$ of $X$ whose $R$-components are $S$-bounded and $\mu(\Omega) > 0.5$.

Given a probability measure $\mu$ on $X$ transfer it to $Y$ as follows: $\lambda(A)=\mu(f^{-1}(A)), \forall A\subset Y$. Find a subset $\Lambda$ of $Y$ satisfying $\lambda(\Lambda) > 0.5$ whose $R_Y$-components are $K$-bounded. Only countably many $\Lambda_i$ of those $R_Y$-components are of interest as the rest have measure $0$.
Given $i$ consider $f^{-1}(\Lambda_i)$ and find a subset $\Omega_i$ of it
whose $R_X$-components are $S$-bounded and $\mu(\Omega_i) > 0.5\cdot \mu(f^{-1}(\Lambda_i))$. Look at $\Omega=\bigcup\limits_i \Omega_i$ and notice its $R_X$-components are $S$ bounded and $\mu(\Omega) > 0.25$.
\end{proof}

\section{Countable asymptotic dimension}

Countable asymptotic dimension was introduced in \cite{JD1} as a generalization of the concept of
straight finite decomposition complexity introduced by Dranishnikov and Zarichnyi \cite{DZ}. One of the main results of this section is that actually the two concepts are equivalent.

A partition of a set is a covering by disjoint sets. We also introduce a notation: if $\mathcal{U}$ is a collection of subsets of $X$ and $A$ is a subset of $X$ then $A \cap \mathcal{U}=\{A \cap U \mid U \in \mathcal{U}\}$.

\begin{Definition}
$X$ is of \textbf{straight finite decomposition complexity} \cite{DZ}  if 
for any increasing sequence of positive real numbers $R_1 < R_2 < \ldots$ there a sequence
$\mathcal{V}_i$, $i \leq n$, of families of subsets of $X$ such that the following conditions are satisfied:
\begin{itemize}
\item[1.] $\mathcal{V}_1=\{X\}$,
\item[2.] each element $U\in \mathcal{V}_i$, $i < n$, can be expressed as a union of at most $2$ families from $\mathcal{V}_{i+1}$ that are $R_i$-disjoint,
\item[3.] $\mathcal{V}_n$ is uniformly bounded.
\end{itemize}
\end{Definition}

\begin{Definition}\label{CountableAsdim}
A metric space $X$ is of \textbf{countable asymptotic dimension} if there is a sequence of integers $n_i\ge 1$, $i\ge 1$, such that for any sequence of positive real numbers $R_i$, $i\ge 1$, there is a sequence
$\mathcal{V}_i$ of families of subsets of $X$ such that the following conditions are satisfied:
\begin{itemize}
\item[1.] $\mathcal{V}_1=\{X\}$,
\item[2.] each element $U\in \mathcal{V}_i$ can be expressed as a union of at most $n_i$ families from $\mathcal{V}_{i+1}$ that are $R_i$-disjoint,
\item[3.] at least one of the families $\mathcal{V}_i$ is uniformly bounded.
\end{itemize}
\end{Definition}

\begin{Corollary}
In the definition of spaces of countable asymptotic dimension we may assume each $\mathcal V_i$ to be a partition of $X$, i.e., a disjoint collection of subsets covering $X$.
\end{Corollary}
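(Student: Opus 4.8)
The plan is to start from the families $\mathcal{V}_i$ furnished by Definition \ref{CountableAsdim} and disjointify them one level at a time, from $i=1$ upward, so that at each stage the current level becomes a partition of $X$ refining the previous one, while the decomposition structure and the uniform boundedness of the witnessing level are preserved. The observation that makes everything work is that both relevant properties — being $R_i$-disjoint and being uniformly bounded — pass to subsets; hence any disjointification performed purely by deleting points from the given sets is automatically harmless. Since $\mathcal{V}_1=\{X\}$ is already a partition, the induction has its base, and I only need to describe the inductive step together with its effect on the deeper levels.

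For the inductive step, suppose $\mathcal{V}_i$ has already been turned into a partition of $X$. For each cell $U\in\mathcal{V}_i$ the definition supplies at most $n_i$ subfamilies $\mathcal{W}^U_1,\ldots,\mathcal{W}^U_{n_i}$ of $\mathcal{V}_{i+1}$, each $R_i$-disjoint, with $U=\bigcup_j\bigcup\mathcal{W}^U_j$; in particular every set occurring here is contained in $U$, so once $\mathcal{V}_i$ is a partition each such set has a well-defined parent cell. I would disjointify inside $U$ by the greedy (lexicographic) rule: replace each $V\in\mathcal{W}^U_j$ by
$$V' = V\setminus\bigcup_{k<j}\bigcup\mathcal{W}^U_k,$$
keeping only the part of $V$ not already covered by an earlier subfamily. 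For fixed $j$ the sets $V'$ remain $R_i$-disjoint, being subsets of the $R_i$-disjoint $\mathcal{W}^U_j$ with the same set subtracted from each; and two sets coming from indices $j<j'$ are now disjoint, since the whole of $\bigcup\mathcal{W}^U_j$ has been removed from the later one. Thus $\{V'\}$ partitions $U$ and still splits into $n_i$ many $R_i$-disjoint subfamilies, and taking the union over the disjoint cells $U\in\mathcal{V}_i$ produces a partition $\mathcal{V}'_{i+1}$ of $X$ refining $\mathcal{V}_i$ with the decomposition property intact.

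The delicate point is compatibility with the deeper levels: the removed chunk $A^U_j=\bigcup_{k<j}\bigcup\mathcal{W}^U_k$ must also be deleted from every descendant of $V$ at levels $i+2,i+3,\ldots$, so that those levels keep decomposing the shrunken cells. I would therefore accompany each cut by a uniform propagation, replacing every set $W$ below $V$ at a later level by $W\setminus A^U_j$ (equivalently, intersecting it with $V'$). As this is once more a passage to a subset, all the $R_j$-disjointness relations and the uniform boundedness of whichever level witnesses Definition \ref{CountableAsdim} survive untouched, and the decomposition tree is preserved with each $U\in\mathcal{V}_i$ still expressed through the cut-down cells. Since a fixed level $k$ is modified only at the finitely many stages $1,\ldots,k-1$ and is finalized once stage $k-1$ disjointifies it, the construction is well defined for every level; discarding sets that become empty, we obtain partitions $\mathcal{V}'_i$ satisfying all three conditions. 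The main obstacle is exactly this downward propagation: one must thread the cuts made at level $i+1$ consistently through all deeper levels so that refinement and the $R_i$-disjoint splittings remain simultaneously valid everywhere.
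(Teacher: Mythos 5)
Your argument is correct, and its engine is the same as the paper's: the greedy disjointification $V'=V\setminus\bigcup_{k<j}\bigcup\mathcal{W}^U_k$ performed inside each cell of an already-finalized partition level, applied inductively starting from $\mathcal{V}_1=\{X\}$. This is precisely the content of the paper's Lemma \ref{LemmaCountableAsDim} (there the disjointification is done set-by-set along a well-ordering rather than block-by-block, an immaterial difference since $R_i$-disjoint sets are already disjoint). Where the two arguments genuinely part ways is on the point you yourself single out as the main obstacle: compatibility with the deeper levels. You restore the exact decomposition condition after each cut by propagating the cut downward, replacing every descendant $W$ of a shrunk set $V$ by $W\cap V'$ at all later levels; this works, since subsets inherit $R$-disjointness and boundedness and each level is modified only finitely often, but it obliges you to carry a whole tree of modified copies (note that a set at level $i+2$ may occur in the chosen decompositions of two overlapping sets at level $i+1$, so the propagation really produces one copy per chain of ancestors --- a bookkeeping subtlety your write-up glosses over, though it is repairable). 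The paper sidesteps the propagation entirely by weakening the induction invariant: Lemma \ref{LemmaCountableAsDim} only asserts that each new cell is \emph{contained in} some element of the original $\mathcal{V}_{i+1}$ (conclusion \textbf{c}), hence \emph{contained in} a union of at most $n_{i+1}$ many $R_{i+1}$-disjoint families from the original, untouched $\mathcal{V}_{i+2}$ (conclusion \textbf{b}); at the next stage this containment is converted back to equality simply by intersecting those families with the cell ($\widetilde{\mathcal{W}_i}=\{U\cap W\}$) before disjointifying. Thus the families $\mathcal{V}_{i+2},\mathcal{V}_{i+3},\ldots$ are never modified and your ``delicate point'' never arises. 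Both routes are valid; the paper's is lighter on bookkeeping, while yours has the merit of producing at every stage a system of partitions satisfying Definition \ref{CountableAsdim} verbatim.
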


\begin{proof}
Use Lemma \ref{LemmaCountableAsDim} as an inductive step. The initial step for $i=1$ holds as $\{X\}$ is an obvious partition of itself.
\end{proof}

\begin{Theorem}
 Metric space $X$ is of countable asymptotic dimension if and only if it is of straight finite decomposition complexity (sFDC).
\end{Theorem}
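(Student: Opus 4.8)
The plan is to prove both implications separately, with the forward direction (countable asymptotic dimension $\Rightarrow$ sFDC) carrying essentially all of the difficulty.

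For the easy implication, suppose $X$ has sFDC; I would verify Definition~\ref{CountableAsdim} with the constant sequence $n_i=2$. Given an arbitrary sequence of positive reals $R_i$ (not necessarily increasing, as Definition~\ref{CountableAsdim} allows), I would first pass to a strictly increasing sequence $R_i'\ge R_i$, say $R_i'=1+\max_{j\le i}R_j$, apply the sFDC hypothesis to $(R_i')$ to obtain a finite sequence $\mathcal V_1,\dots,\mathcal V_n$ with $\mathcal V_1=\{X\}$, binary $R_i'$-disjoint decompositions, and $\mathcal V_n$ uniformly bounded, and then extend it to an infinite sequence by declaring $\mathcal V_i:=\mathcal V_n$ for $i>n$ and decomposing each $U\in\mathcal V_i$ as the one-element family $\{U\}$. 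Since $R_i'\ge R_i$ makes $R_i'$-disjointness imply $R_i$-disjointness, and singletons are trivially disjoint, all three conditions of Definition~\ref{CountableAsdim} hold (with $\mathcal V_n$ providing the uniformly bounded family).

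For the hard implication, suppose $X$ has countable asymptotic dimension with fixed sequence $n_i\ge 1$, and fix an increasing sequence $R_1<R_2<\cdots$ for which I must build an sFDC decomposition. Two mismatches must be overcome: countable asymptotic dimension permits $n_i$-fold decompositions whereas sFDC permits only binary ones, and it produces an infinite sequence in which merely \emph{some} family is uniformly bounded whereas sFDC wants a \emph{finite} sequence terminating in a uniformly bounded family. The device resolving the second point is that sFDC allows the length to depend on $(R_i)$ while the integers $n_i$ are fixed in advance, so I can precompute the partial sums $m_i:=\sum_{j\le i}n_j$. I would then set the countable-asymptotic-dimension scales to $S_i:=R_{m_i}$, apply the hypothesis (reducing to partitions via the preceding corollary) to obtain families $\mathcal V^{\mathrm{cad}}_i$ with $\mathcal V^{\mathrm{cad}}_1=\{X\}$, each $U\in\mathcal V^{\mathrm{cad}}_i$ a union of $n_i$ many $S_i$-disjoint subfamilies of $\mathcal V^{\mathrm{cad}}_{i+1}$, and some $\mathcal V^{\mathrm{cad}}_k$ uniformly bounded.

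The core construction is a binarization by peeling. I would dedicate the block of sFDC levels $m_{i-1}+1,\dots,m_i$ to realizing the single $n_i$-fold step $\mathcal V^{\mathrm{cad}}_i\to\mathcal V^{\mathrm{cad}}_{i+1}$: writing $U=\bigcup_{j=1}^{n_i}\mathcal W_j$ with each $\mathcal W_j$ being $S_i$-disjoint, I peel off one $\mathcal W_j$ per level while carrying the remainder $Q_j:=\bigcup_{l>j}\bigcup\mathcal W_l$ as a single set, so that at each level $U$ (or the current remainder) is the union of the genuine $S_i$-disjoint family $\mathcal W_j$ and the singleton family $\{Q_j\}$, i.e. at most two families, both $R_\ell$-disjoint since $S_i=R_{m_i}\ge R_\ell$ for every level $\ell$ in this block and singletons cost nothing. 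Already-peeled pieces and sets from earlier stages are carried forward as singletons. Finally I truncate at the sFDC level $m_{k-1}+1$ at which $\mathcal V^{\mathrm{cad}}_k$ appears; that family is uniformly bounded, supplying the terminal condition. The main obstacle I anticipate is precisely this scale-matching bookkeeping: $S_i$ must be chosen before invoking the hypothesis, yet must dominate every sFDC scale $R_\ell$ used while stage $i$ is binarized, which is exactly why the block lengths $n_i$ must be known in advance (they are, being fixed) and why $S_i=R_{m_i}$ is forced. A secondary point needing care is verifying that peeling all $U\in\mathcal V^{\mathrm{cad}}_i$ in parallel yields globally $R_\ell$-disjoint subfamilies and that the passage to partitions keeps the peeled pieces and remainders genuinely disjoint, so the singleton families and the $\mathcal W_j$ behave as claimed.
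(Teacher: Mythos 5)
Your proposal is correct and is essentially the paper's own argument: the paper likewise treats the sFDC-to-countable-asymptotic-dimension direction as immediate, feeds the scales $R_{n_1}<R_{n_1+n_2}<R_{n_1+n_2+n_3}<\cdots$ (your $S_i=R_{m_i}$) into the countable asymptotic dimension hypothesis, and binarizes each $n_i$-fold step by exactly your peeling device --- splitting off one $R_{m_i}$-disjoint family $\mathcal{W}_j$ per sFDC level while carrying the remaining union $\bigcup_{l>j}W_l$ as a one-element family, truncating once the uniformly bounded family appears. The only remark worth adding is that your flagged worry about \emph{global} $R_\ell$-disjointness of the peeled families taken over all $U$ simultaneously is vacuous: both definitions require disjointness only of the families decomposing a single element, so the parallel peeling needs no extra care.
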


\begin{proof}
 One direction is fairly simple by definition. Suppose $X$ is of countable asymptotic dimension. Choose $R_1<R_2<\ldots$. As $X$ is of countable asymptotic dimension we obtain a sequence of families $\mathcal V_i$ corresponding to $R_{n_1}<R_{n_1+n_2}<R_{n_1+n_2+n_3}<\ldots$. In particular, we may assume $\mathcal V_2$ consists of $R_{n_1}$-disjoint families $\mathcal W_1,\ldots, \mathcal W_{n_1}$ covering $X$. Let $W_i$ denote the union of all elements of $\mathcal W_i$. We will construct families $\mathcal U_i$ corresponding to the definition of the sFDC for $i\in\{1,\ldots,n_1\}$ inductively:
 \begin{itemize}
	\item[1.] $\mathcal U_1=\{X\}$;
	\item[2.] $\mathcal U_2=\mathcal W_1 \cup \bigcup _{i=2}^{n_1}W_i$. $X$, the only element 		of $\mathcal U_1$, is a 	union of at most two $R_1$-disjoint families in $\mathcal 			U_2$:
	\begin{itemize}
		\item[2a.] $\mathcal W_1$, which is $R_{n_1}$-disjoint by definition hence it is also 				 $R_1$-disjoint;
		\item[2b.] $\{ \bigcup _{i=2}^{n_1}W_i\}$ which is $R_1$-disjoint as it is a one-element collection.
\end{itemize}
	\item[3.] the inductive step is the following: for $m<n_1$ define $\mathcal U_m=\mathcal W_1 			 \cup \ldots \cup\mathcal W_{m-1} \cup \bigcup _{i=m}^{n_1}W_i$. Every element 		of $	\mathcal 	 U_{m-1}$ is a union of at most two $R_m$-disjoint families in $\mathcal U_m$: every element of $\mathcal W_1 \cup \ldots \cup\mathcal W_{m-2}$ appears in $U_m$ as well and $\bigcup _{i=m-1}^{n_1}W_i$ can be expressed as a union of two families:
		\begin{itemize}
		\item[3a.] $\mathcal W_{m-1}$, which is $R_{n_1}$-disjoint by definition hence it is also 				 $R_m$-disjoint;
		\item[3b.] $\{ \bigcup _{i=m}^{n_1}W_i\}$ which is $R_m$-disjoint as it is a one element 				 collection.
		\end{itemize}
	\item[4.] $\mathcal U_{n_1}=\mathcal W_1 \cup \ldots \cup\mathcal W_{n_1}$. Again, every element 	 of $	 \mathcal 	U_{n_1}$ is a union of at most two $R_{n_1}$-disjoint families in $\mathcal U_m$: every element of $\mathcal W_1 \cup \ldots \cup\mathcal W_{n_1-2}$ appears in $U_{n_1}$ as well and $\bigcup _{i=n_1-2}^{n_1}W_i$ can be expressed as a union of two families:
		\begin{itemize}
		\item[4a.] $\mathcal W_{n_1-1}$, which is $R_{n_1}$-disjoint by definition;
		\item[4b.] $\mathcal W_{n_1}$, which is $R_{n_1}$-disjoint by definition .						 
\end{itemize}	
\end{itemize}

We have thus obtained $\mathcal U_{n_1}=\mathcal V_2$. Proceed in the same way for every element of $\mathcal V_2$ to obtain $U_{i}$ for $i\in \{n_1+1,\ldots, n_1+n_2\}$ with $\mathcal U_{n_1+n_2}=\mathcal V_2$. By induction we eventually obtain a uniformly bounded family.
\end{proof}

\begin{Proposition}
If $X$ is of countable asymptotic dimension then for every $R$ there exists $n$ so that $X$ can be covered by $n-$many collections $\mathcal U_1, \ldots, \mathcal U_n$ of subsets of $X$, all of which are uniformly bounded and $R$-disjoint.
\end{Proposition}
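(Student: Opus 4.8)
The plan is to apply the definition of countable asymptotic dimension with the constant sequence $R_i = R$, and then to organize the resulting tower of decompositions into finitely many ``colored'' families of leaves.

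First I would fix the sequence of integers $n_i \geq 1$ witnessing countable asymptotic dimension, set $R_i := R$ for every $i$ (a legitimate choice of sequence), and invoke the Corollary so that the resulting families $\mathcal{V}_i$ may be taken to be partitions of $X$. Since at least one $\mathcal{V}_i$ is uniformly bounded, I fix a finite index $N$ with $\mathcal{V}_N$ being $M$-bounded for some $M$. Because each $\mathcal{V}_i$ is a partition and each of its elements is a union of elements of $\mathcal{V}_{i+1}$, the partitions are genuinely nested: every $V \in \mathcal{V}_{i+1}$ is contained in a unique element of $\mathcal{V}_i$. Hence every $W \in \mathcal{V}_N$ carries a unique chain of ancestors $W = W_N \subset W_{N-1} \subset \cdots \subset W_1 = X$ with $W_i \in \mathcal{V}_i$.

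Next I would assign colors. For each $i < N$, the element $W_i$ is a union of at most $n_i$ families from $\mathcal{V}_{i+1}$ that are $R_i$-disjoint; let $c_i(W) \in \{1, \ldots, n_i\}$ record which of these families contains the child $W_{i+1}$. Associating to each $W$ the tuple $(c_1(W), \ldots, c_{N-1}(W))$ colors $\mathcal{V}_N$ with at most $n := \prod_{i=1}^{N-1} n_i$ colors. I then define $\mathcal{U}_{\vec c}$ to be the subcollection of all $W \in \mathcal{V}_N$ of color $\vec c$. These at most $n$ families cover $X$ (their union is $\mathcal{V}_N$), and each is $M$-bounded, being a subcollection of $\mathcal{V}_N$.

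It remains to verify that each $\mathcal{U}_{\vec c}$ is $R$-disjoint, which I regard as the crux. Given distinct $W, W' \in \mathcal{U}_{\vec c}$, let $i+1$ be the first level at which their ancestry chains differ, so $W_i = W'_i$ but $W_{i+1} \neq W'_{i+1}$; such an $i$ exists because $W_1 = W'_1 = X$ while $W_N \neq W'_N$. Since $W$ and $W'$ share the color $c_i(W) = c_i(W')$, the distinct children $W_{i+1}$ and $W'_{i+1}$ lie in the same $R_i$-disjoint subfamily of the decomposition of $W_i$, whence $d(W_{i+1}, W'_{i+1}) \geq R_i = R$. As $W \subset W_{i+1}$ and $W' \subset W'_{i+1}$, we get $d(W, W') \geq R$, as required. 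The main obstacle is purely bookkeeping: arranging that the decompositions are genuinely nested (supplied by the partition Corollary) so that the first-divergence level is well defined and its $R_i$-disjointness propagates down to the leaves of $\mathcal{V}_N$.
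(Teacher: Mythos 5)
Your proof is correct, and its skeleton is the same as the paper's: feed a suitable scale sequence into the definition of countable asymptotic dimension and descend to a uniformly bounded family $\mathcal{V}_N$. The difference lies in what is said about the crux. The paper's proof is two sentences long: it takes $R_i := R+i$ and simply asserts that one ``eventually'' obtains a decomposition of $X$ into finitely many uniformly bounded $R$-disjoint families, leaving entirely implicit the point you correctly single out --- why the per-element decompositions at successive levels merge into at most $n_1 n_2 \cdots n_{N-1}$ families that are $R$-disjoint \emph{globally}, not merely inside each parent. Your write-up supplies exactly that missing bookkeeping: the partition Corollary gives each leaf a unique ancestry chain, and the coloring together with the first-divergence argument propagates $R_i$-disjointness down to the leaves, using that children are contained in their parents. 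Two remarks. First, your constant choice $R_i := R$ is legitimate, since the definition quantifies over arbitrary sequences of positive reals (the paper's increasing choice $R+i$ serves the same purpose). Second, the partition Corollary is a convenience rather than a necessity: one can instead merge directly, defining for each color tuple the union over all parents of the corresponding subfamilies, and check disjointness by cases --- two leaves under a common ancestor at the divergence level are separated by that ancestor's $R_i$-disjoint decomposition, while two leaves under distinct members of a higher-level $R_j$-disjoint family are separated because each leaf is contained in its ancestor (containment is automatic from ``expressed as a union''). Your route buys well-defined ancestry and a cleaner divergence argument; the partition-free route avoids invoking the Corollary and is closer to what the paper's terse proof implicitly does.
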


\begin{proof}
Suppose $X$ is of countable asymptotic dimension and choose covers $\mathcal V_i$ corresponding to sequence $R_i:=R+i$. By the definition we eventually obtain a collection $\mathcal V_i$, which can be decomposed into a finite number of uniformly bounded $R_i$-disjoint families, hence they are also $R$-disjoint, which suffices.
\end{proof}

\begin{Lemma}\label{LemmaCountableAsDim}
Suppose $\mathcal{U}_i, \mathcal{V}_{i+1}$ and $\mathcal{V}_{i+2}$ are covers of space $X$ with the following properties.
\begin{enumerate}
  \item $\mathcal{U}_i$ is a partition of $X$;
  \item each element of $\mathcal{U}_i$ is contained in a union of at most $n_i$ families from $\mathcal{V}_{i+1}$ that are $R_i$-disjoint;
  \item each element of $\mathcal{V}_{i+1}$ can be expressed as a union of at most $n_{i+1}$ families from $\mathcal{V}_{i+2}$ that are $R_{i+1}$-disjoint.
\end{enumerate}
Then there exists a partition $\mathcal{U}_{i+1}$ of $X$ so that:
\begin{description}
  \item[a] each element of $\mathcal{U}_i$ can be expressed as a union of at most $n_i$ families from $\mathcal{U}_{i+1}$ that are $R_i$-disjoint;
  \item[b] each element of $\mathcal{U}_{i+1}$ is contained in a union of at most $n_{i+1}$ families from $\mathcal{V}_{i+2}$ that are $R_{i+1}$-disjoint;
  \item[c] each element of $\mathcal{U}_{i+1}$ is contained in some element of $\mathcal{V}_{i+1}$.
\end{description}
\end{Lemma}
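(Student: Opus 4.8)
The plan is to build $\mathcal{U}_{i+1}$ by disjointifying, inside each element of the partition $\mathcal{U}_i$, the $R_i$-disjoint subfamilies of $\mathcal{V}_{i+1}$ supplied by hypothesis (2), and then to read off the three conclusions. Fix $U\in\mathcal{U}_i$ and use (2) to choose $R_i$-disjoint subfamilies $\mathcal{A}^U_1,\ldots,\mathcal{A}^U_{n_i}$ of $\mathcal{V}_{i+1}$ whose union contains $U$. Order these families by their index and, for each $V\in\mathcal{A}^U_j$, set
$$
V':=(V\cap U)\setminus\bigcup_{k<j}\bigcup\mathcal{A}^U_k.
$$
Intersecting with $U$ and subtracting the earlier families converts the overlapping cover into a genuine partition of $U$: each point of $U$ lands in $V'$ for the smallest index $j$ at which it is covered, so the $V'$ cover $U$, and two distinct nonempty pieces are disjoint either because they come from the same $R_i$-disjoint (hence disjoint) family $\mathcal{A}^U_j$, or because the later one has the earlier family subtracted off.

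Next I would set $\mathcal{U}_{i+1}:=\bigcup_{U\in\mathcal{U}_i}\{V'\mid V\in\mathcal{A}^U_j,\ 1\le j\le n_i\}$, discarding empties. This is a partition of $X$: the pieces coming from distinct $U$ are disjoint because $\mathcal{U}_i$ is itself a partition, and inside each $U$ the pieces partition $U$ by the previous paragraph. Conclusions (a) and (c) are then immediate from the construction. For (a), for fixed $j$ the family $\{V'\mid V\in\mathcal{A}^U_j\}$ is $R_i$-disjoint, since passing to subsets of an $R_i$-disjoint family preserves $R_i$-disjointness, and these at most $n_i$ families partition $U$. For (c), each $V'$ is by definition contained in its element $V\in\mathcal{V}_{i+1}$.

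For (b) I would simply combine (c) with hypothesis (3): if $V'\subset V\in\mathcal{V}_{i+1}$, then by (3) the set $V$ is a union of at most $n_{i+1}$ families from $\mathcal{V}_{i+2}$ that are $R_{i+1}$-disjoint, and since $V'\subset V$ those same families witness that $V'$ is contained in such a union. Thus no intersection step is needed for (b); it follows formally from (c).

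I expect no serious obstacle. The only points needing care are the bookkeeping that guarantees a genuine partition, handled by the ordering-and-subtraction device together with the partition property of $\mathcal{U}_i$, and the elementary observation that taking subsets preserves $R_i$-disjointness. A minor technical nuisance is that a single $V\in\mathcal{V}_{i+1}$ may recur across different $U$'s or across different families $\mathcal{A}^U_j$ for the same $U$; this is harmless, since intersecting with $U$ separates the contributions of distinct $U$'s while the ordering handles any repetition within a single $U$, and empty pieces are discarded.
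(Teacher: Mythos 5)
Your proof is correct and takes essentially the same approach as the paper's: intersect the $R_i$-disjoint subfamilies of $\mathcal{V}_{i+1}$ with each $U\in\mathcal{U}_i$, disjointify by an ordering-and-subtraction device (the paper well-orders the individual sets rather than subtracting whole earlier families, but since sets within one $R_i$-disjoint family are already pairwise disjoint the two constructions agree), note that passing to subsets preserves $R_i$-disjointness, and obtain \textbf{b} formally from \textbf{c} together with hypothesis (3). No gaps.
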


\begin{proof}
Choose $U \in \mathcal{U}_i$ and let $\mathcal{W}_1, \mathcal{W}_2, \ldots, \mathcal{W}_{n_i}$ denote a collection of $R_i$-disjoint families from $\mathcal{V}_{i+1}$ so that $\bigcup_{i=1}^{n_i}\bigcup \mathcal{W}_i$ contains $U$. Define 
$\widetilde{\mathcal{W}_i}=\{U \cap W \mid W \in \mathcal{W}_i\}$ for every $i$ and note that 
$\widetilde{\mathcal{W}_1}, \widetilde{\mathcal{W}_2}, \ldots, \widetilde{\mathcal{W}_{n_i}}$ is a collection of $R_i$-disjoint families for which $\widetilde {\mathcal{W}}=\bigcup_{i=1}^{n_i}\bigcup \widetilde{\mathcal{W}_i}$ equals $U$. The collection of subsets $\bigcup_{i=1}^{n_i}\widetilde{\mathcal{W}_i}$ may be well-ordered in the form $\{W_j\}_{j\in J}$ where $J$ is a well-ordered set. Define a collection $\{U_j\}_{j\in J}$ by the rule
$$
\forall j\in J: \quad U_j =W_j \setminus \bigcup_{k < j} W_k.
$$
Note that $\{U_j\}_{j\in J}$ is a partition of $U$. Undo the well-ordering by reindexing sets $\{U_j\}_{j\in J}$ back into a collection of $R_i$-disjoint families $\widehat{\mathcal{W}_1}, \widehat{\mathcal{W}_2}, \ldots, 
\widehat{\mathcal{W}_{n_i}}$ which together constitute a partition of $U$: the reindexing should be exactly the inversion to previous well-ordering (i.e., if $W\in \widetilde{\mathcal{W}_1}$ was given index $j_0\in J$ then $U_{j_0}$ should belong to $\widehat {\mathcal{W}_1}$ ) and $R_i$-disjointness is preserved as we have only decreased the sets. Let $\mathcal{U}_U=\bigcup_{k=1}^{n_i}\widehat{\mathcal{W}_k}$ denote a collection of obtained sets.

Since $\mathcal{U}_i$ is a partition of $X$ and $\mathcal{U}_U$ is a partition of $U$ for every $U\in \mathcal{U}_i$, the collection $\mathcal{U}_{i+1}=\bigcup_{U\in \mathcal{U}_i} \mathcal{U}_U$ of subsets is a partition of $X$. Furthermore, each element of $\mathcal{U}_i$ can be expressed as a union of at most $n_i$ families from $\mathcal{U}_{i+1}$ that are $R_i$-disjoint by  construction. This proves \textbf{a}.

To prove \textbf{c} note that every element of $\mathcal{U}_{i+1}$ was obtained by taking an intersection of some element of $\mathcal{V}_{i+1}$ by some sets. In particular, every element of $\mathcal{U}_{i+1}$ is contained in some element of $\mathcal{V}_{i+1}$.

Statement \textbf{b} follows from \textbf{c} and (3).
\end{proof}

\begin{Theorem}\label{ThmCountableAsdim}
 Suppose $f:X\to Y$ is coarsely $n$-to-1 with control $D$, coarsely surjective and coarse with control $E$.  Then $Y$ is of countable asymptotic dimension if and only if $X$ is of countable asymptotic dimension.
\end{Theorem}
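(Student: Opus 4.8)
The plan is to prove the two implications separately, after the usual reduction. Since $f$ is coarsely surjective and countable asymptotic dimension is a coarse invariant (it is phrased entirely through uniformly bounded families and $R$-disjointness), I may replace $Y$ by $f(X)$ and assume $f$ is surjective, exactly as in Theorem \ref{ThmAsDim} and the Asymptotic Property C theorems. Throughout I would also feel free to invoke the Corollary following Definition \ref{CountableAsdim}, so that every witnessing tower may be taken to consist of partitions, and I would keep in mind the two control functions: $D$ for coarse $n$-to-$1$-ness and $E$ for bornologousness.

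For the direction ``$Y$ of countable asymptotic dimension $\Rightarrow X$ of countable asymptotic dimension'' I would pull the witnessing tower back through $f$. Given a target sequence $R_i$ on $X$, set $S_i:=E(R_i)$ and feed $(S_i)$ into the definition for $Y$, with its fixed integer sequence $(n_i)$, obtaining families $\mathcal V_i$ with $\mathcal V_1=\{Y\}$, the refinement property at scales $S_i$ and multiplicities $n_i$, and some $\mathcal V_N$ uniformly bounded. The pullbacks $\mathcal U_i:=\{f^{-1}(V)\mid V\in\mathcal V_i\}$ satisfy $\mathcal U_1=\{X\}$ and inherit the refinement property at scales $R_i$ with the \emph{same} multiplicities, because Lemma \ref{LemCoarseMapPullBackDisjointness} sends each $E(R_i)$-disjoint family to an $R_i$-disjoint one and preimages respect unions. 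The only defect is that $\mathcal U_N=f^{-1}(\mathcal V_N)$ need not be uniformly bounded; here I would use that $f$ is coarsely $n$-to-$1$. Writing $K$ for the diameter bound of $\mathcal V_N$ and setting $R^\ast:=\max(R_N,D(K))$, Lemma \ref{FibersOfcoarselynTo1Functions} shows each $f^{-1}(V)$ has at most $n$ many $R^\ast$-components, all of diameter at most $2nR^\ast$. Adjoining the family of all these components as one extra level $\mathcal U_{N+1}$ restores uniform boundedness, and each $f^{-1}(V)$ is the union of the single $R_N$-disjoint family of its own components (multiplicity $1$). As every level uses multiplicity at most $n_i$, the same fixed sequence $(n_i)$ witnesses countable asymptotic dimension of $X$ for all choices of $(R_i)$.

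For the forward direction ``$X\Rightarrow Y$'' I would push the $X$-tower $\mathcal A_i$ (with fixed multiplicities $m_i$, $\mathcal A_1=\{X\}$, uniformly bounded $\mathcal A_N$) forward. The fundamental difficulty, and the \textbf{main obstacle} of the whole theorem, is that $f$ does not preserve disjointness: if $\mathcal W\subset\mathcal A_{i+1}$ is $S_i$-disjoint in $X$, the image family $\{f(W)\mid W\in\mathcal W\}$ need not be $R_i$-disjoint, and a bound on its $R_i$-dimension does \emph{not} allow one to recolor its index set into few $R_i$-disjoint subfamilies. The only remedy is Proposition \ref{PropTransferCoverProperties} (equivalently Lemma \ref{LemmaMiyataVirk} followed by Lemma \ref{LemmaMultToDisjointness}): choosing the $X$-scale at level $i$ to be $S_i:=D(m_i n R_i)$ and writing $r:=m_i n R_i$, the image of the refining subfamily covering $f(A)$ has $r$-dimension at most $m_i n-1$, so Lemma \ref{LemmaMultToDisjointness} splits it into at most $m_i n$ families that are $R_i$-disjoint. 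The price is that this lemma manufactures new sets, each contained in an intersection of $r$-neighborhoods of image sets $f(W)$; these are not subfamilies of $f(\mathcal A_{i+1})$, so the nesting of the tower is a priori destroyed.

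To reconcile this I would build on $Y$ not a genuine tower but the \emph{contained-in} variant of Definition \ref{CountableAsdim}, in which each element of level $i$ is only required to be contained in a union of at most $n_i:=m_i n$ many $R_i$-disjoint families from level $i+1$. The sets produced by Lemma \ref{LemmaMultToDisjointness} are by construction contained in such neighborhoods, and the two-sided containment recorded in part (1) of that lemma lets each such set be re-expressed at the next stage through neighborhoods of images of $\mathcal A_{i+2}$-elements, using that each $W\in\mathcal A_{i+1}$ is itself refined by $\mathcal A_{i+2}$. Iterating yields a contained-in tower on $Y$ whose bottom level is the ($E$-image of the) uniformly bounded $\mathcal A_N$. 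Finally I would convert this contained-in tower into a genuine partition tower by using Lemma \ref{LemmaCountableAsDim} as the inductive step, exactly as in the Corollary after Definition \ref{CountableAsdim}; since that conversion preserves multiplicities, the fixed sequence $(m_i n)$ witnesses countable asymptotic dimension of $Y$. The attendant scale bookkeeping (choosing the $S_i$ with foresight so that re-disjointification at level $i$ lands at scale at least $R_i$ and boundedness propagates) is routine telescoping of the kind already used in the Asymptotic Property C and sFDC arguments; the genuinely delicate point remains the disjointness-repair-and-renesting just described.
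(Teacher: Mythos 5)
Your proposal is correct and follows essentially the same route as the paper's own proof: the backward direction pulls the tower back via Lemma \ref{LemCoarseMapPullBackDisjointness} and repairs boundedness at the last level using the $R$-components of fibers from Lemma \ref{FibersOfcoarselynTo1Functions}, and the forward direction pushes the tower forward via Lemma \ref{LemmaMiyataVirk} plus Lemma \ref{LemmaMultToDisjointness} at scales of the form $D(n\,n_i R_i + \cdots)$, tracks nesting through ``contained-in-a-neighborhood'' conditions, and legitimizes the contained-in tower through Lemma \ref{LemmaCountableAsDim}. The only (harmless) deviations are cosmetic: you use a single $R_N$-disjoint family of components (multiplicity $1$) where the paper takes $\max\{n_i,n\}$, and you make explicit the appeal to Lemma \ref{LemmaCountableAsDim} that the paper leaves implicit in its conditions \textbf{a} and \textbf{b}.
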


\begin{proof}
Suppose $Y$ is of countable asymptotic dimension.
Choose $R_1<R_2<\ldots$. According to the definition of the countable asymptotic dimension choose for $E(R_1)<E(R_2)<\ldots$ a sequence $\mathcal V_i$ of families of subsets of $Y$. Define $\mathcal U_i=\{f^{-1}(V)\mid V \in \mathcal V_i\}$. Sequence $\mathcal U_i$ (actually its finite subsequence, see (3) below in the proof) proves $X$ to be of countable asymptotic dimension:
\begin{enumerate}
  \item $\mathcal U_1=\{X\}$.
  \item if $V\in \mathcal V_i$ can be expressed as a union of at most $n_i$-many $E(R_i)$-disjoint families from $\mathcal V_{i+1}$ then $f^{-1}(V)$ can be expressed as a union of at most $n_i$-many $R_i$-disjoint families from $\mathcal U_{i+1}$ by taking the preimages and applying Lemma \ref{LemCoarseMapPullBackDisjointness}.
  \item suppose $\mathcal V_m$ is uniformly bounded by $b$. We will redefine $\mathcal U_{m+1}$. For every $V \in \mathcal V_m$ the preimage $f^{-1}(V)$ can be expressed as a disjoint union of at most $n$-many $(nD(b)+(n-1)R_i)-$bounded $R_i$-disjoint subsets of $X$. Let  $\mathcal U_{m+1}$ consist of all such sets. The collection $\mathcal U_{m+1}$ is uniformly bounded by $D(b)$ and every element of $\mathcal U_{m}$ can be expressed as a union of at most $n$-many elements of $\mathcal U_{m+1}$. This concludes the proof. For the sake of formal argument we may define $\mathcal U_{m+1}=\mathcal U_j, \forall j\geq m+1$ and note that the sequence of integers for $X$ may be taken to be $(\max\{n_i,n\})_i$.
\end{enumerate}
\bigskip

Suppose $X$ is of countable asymptotic dimension. We may assume $f$ to be surjective: the justification is a simple exercise. Choose $R_1<R_2<\ldots$. We will define a sequence $\mathcal{V}_i$ of families of subsets of $Y$ satisfying the conditions in the definition of the countable asymptotic dimension for parameters $R_1<R_2<\ldots$.

According to the definition of the countable asymptotic dimension for $X$ there exists a sequence $\{n_i\}_{i \geq 1}$ such that for $D(n n_1 R_1)<D(n n_2 R_2+2n n_1 R_1)<\ldots$ (the pattern of increasing parameters is not yet visible; however, it will become apparent that appropriate parameters may be chosen depending on $\{n_i\}, \{R_i\}$ and $n$) there exists a sequence $\mathcal U_i$ of partitions of $Y$ with appropriate properties. Define $\mathcal{V}_1=\{Y\}$.

Collection $\mathcal{U}_2$ is a partition of $X$ of $D(n n_1 R_1)$-multiplicity at most $n_1$. By Lemma \ref{LemmaMiyataVirk} the collection $f(\mathcal{U}_2)$ is a cover of $Y$ of $(n n_1 R_1)$-multiplicity at most $n n_1$. By Lemma \ref{LemmaMultToDisjointness} there exists a cover $\mathcal{V}_2$ of $Y$ consisting of $(n n_1)$-many $R_1$-disjoint families with the following property: 
\medskip

\begin{minipage}[t]{.97\textwidth}
every element of $\mathcal{V}_2$ is contained in the $(n n_1 R_1)$-neighborhood \\ of some element of $f(\mathcal{U}_2)$. \hfill (*)
\end{minipage}

\medskip

The next step: the definition of $\mathcal{V}_3$ is actually an inductive step in the construction of sequence $\mathcal{V}_i$. For the sake of simplicity we only present it for the case $i=3$. 

Choose any $U \in \mathcal{U}_2$. Recall that $U$ may be covered by a collection of $n_2$-many $D(n n_2 R_2+2n n_1 R_1)$-disjoint families $\mathcal{W}_1, \mathcal{W}_2, \ldots, \mathcal{W}_{n_2}$ (some of these families may be empty in order to get exactly $n_2$-many of them). Let $\mathcal{W}=\bigcup_{j=1}^{n_2}\mathcal{W}_j$ denote the corresponding partition of $U$. By Lemma \ref{LemmaMiyataVirk} the collection $f(\mathcal{W})$ is a cover of $f(U)$ of $(n n_2 R_2+2n n_1 R_1)$-multiplicity at most $n n_2$. We now expand the sets by $n n_1 R_1$ as suggested by (*): the collection of $(n n_1 R_1)$-neighborhoods of family $\mathcal{W}$ is a cover of the $(n n_1 R_1)$-neighborhood of $f(U)$ of $(n n_2 R_2)$-multiplicity at most $n n_2$. By Lemma \ref{LemmaMultToDisjointness} there exists a cover $\mathcal{V}_3^U$ of the $(n n_1 R_1)$-neighborhood of $f(U)$ consisting of $(n n_2)$-many $R_2$-disjoint families with the following property:
\medskip

\begin{minipage}[t]{.97\textwidth}
every element of $\mathcal{V}_3^U$ is contained in $(n n_2 R_2+ n n_1 R_1)$-neighborhood \\of some element of $f(\mathcal{U}_3)$. \hfill (**)
\end{minipage}

\medskip 
 
 Define a collection
    $$
    \mathcal{V}_3=\bigcup_{U\in \mathcal{U}_2} \mathcal{V}_3^U.
    $$

Observe the following properties of obtained $\mathcal{V}_3$:
\begin{description}
  \item[a] The collection $\mathcal{V}_3$ covers $Y$ as every $\mathcal{V}_3^U$ covers $f(U)$ for every $U\in \mathcal{U}_2$ and the collection $\{f(U)\}_{U\in \mathcal{U}_2}$ covers $Y$;
  \item[b] Every element of $\mathcal{V}_2$ is by (*) contained in $(n n_1 R_1)$-neighborhood of $f(U)$ for some element $U \in \mathcal{U}_2$. By construction $\mathcal{V}_3^U$ is a cover of the $(n n_1 R_1)$-neighborhood of $f(U)$ consisting of $(n n_2)$-many $R_2$-disjoint families. In particular: every element of $\mathcal{V}_2$ can be covered by a collection of $(n n_2)$-many $R_2$-disjoint families from $\mathcal{V}_3$.
\end{description}

Note that \textbf{a.} and \textbf{b.} are conditions required by the definition of the countable asymptotic dimension. 

Proceed by induction: use the (*)-type conditions (i.e., conditions stating "every element of $\mathcal{V}_i$ is contained in the $L(\{n_i\}, \{R_i\},n)$-neighborhood of some element of $f(\mathcal{U}_{i})$." Examples are condition (*) and condition (**) for all $U$.) to continue the sequence $D(n n_1 R_1)<D(n n_2 R_2+2n n_1 R_1)<\ldots$ of disjointness of $\mathcal{U}_i$: the sequence depends only on $\{n_i\}, \{R_i\}$ and $n$. Note that if $\mathcal{U}_i$ is uniformly bounded then the obtained $\mathcal{V}_i$ is (again, by the (*)-type condition) uniformly bounded as well, which completes the proof.
\end{proof}

\end{document}